\newtheorem{neu}{}[section]
\newtheorem{Cor}[neu]{Corollary}
\newtheorem*{Cor*}{Corollary}
\newtheorem{Thm}[neu]{Theorem}
\newtheorem*{Thm*}{Theorem}
\newtheorem{Prop}[neu]{Proposition}
\newtheorem*{Prop*}{Proposition}
\theoremstyle{definition}
\newtheorem*{FunLemma}{Fundamental Lemma}
\newtheorem*{Rmk*}{Remark}
\newtheorem{Rmk}[neu]{Remark}
\newtheorem*{Ex*}{Example}
\newtheorem*{Qu*}{Question}
\newtheorem{Def}[neu]{Definition}
\newcommand{\Z}{\mathbb{Z}}
\newcommand{\R}{\mathbb{R}}
\newcommand{\C}{\mathbb{C}}
\newcommand{\pf}{\longrightarrow}
\newcommand{\id}{\mathrm{id}}
\newcommand{\om}{\omega}
\newcommand{\Om}{\Omega}
\newcommand{\A}{\mathcal{A}}
\newcommand{\D}{\mathbb{D}}
\renewcommand{\L}{\mathscr{L}}
\renewcommand{\H}{\mathrm{H}}
\newcommand{\Ham}{\mathrm{Ham}}
\newcommand{\SH}{\mathrm{SH}}
\newcommand{\RFH}{\mathrm{RFH}}
\newcommand{\RFHb}{\mathbf{RFH}}
\newcommand{\RFC}{\mathrm{RFC}}
\newcommand{\RFCb}{\mathbf{RFC}}
\newcommand{\HM}{\mathrm{HM}}
\newcommand{\Crit}{\mathrm{Crit}}
\newcommand{\beq}{\begin{equation}}
\newcommand{\beqn}{\begin{equation}\nonumber}
\newcommand{\eeq}{\end{equation}}
\newcommand{\bea}{\begin{equation}\begin{aligned}}
\newcommand{\bean}{\begin{equation}\begin{aligned}\nonumber}
\newcommand{\eea}{\end{aligned}\end{equation}}
\numberwithin{equation}{section}
\definecolor{Urs}{rgb}{0,.7,0}
\definecolor{Peter}{rgb}{0,0,1}
\definecolor{red}{rgb}{1,0,0}
\newcommand{\p}{\partial}
\newcommand{\Mp}{\mathfrak{M}}
\begin{document}
\title{Rabinowitz Floer homology: A survey}
\author{Peter Albers}
\author{Urs Frauenfelder}
\address{
    Peter Albers\\
    Department of Mathematics\\
    Purdue University}
\email{palbers@math.purdue.edu}
\address{
    Urs Frauenfelder\\
    Department of Mathematics and Research Institute of Mathematics\\
    Seoul National University}
\email{frauenf@snu.ac.kr}
\keywords{Rabinowitz Floer homology, Global Hamiltonian perturbations, Ma\~n\'e's critical value, Magnetic fields}
\subjclass[2000]{53D40, 37J10, 58J05}
\begin{abstract}
Rabinowitz Floer homology is the semi-infinite dimensional Morse homology associated to the Rabinowitz action functional used in the pioneering work of Rabinowitz. Gradient flow lines are solutions of a vortex-like equation. In this survey article we describe the construction of Rabinowitz Floer homology and its applications to symplectic and contact topology, global Hamiltonian perturbations and the study of magnetic fields.
\end{abstract}
\maketitle
\tableofcontents

\section{Rabinowitz Floer homology}

\subsection{The Rabinowitz action functional}

In his pioneering work  \cite{Rabinowitz_Periodic_solutions_of_Hamiltonian_systems,Rabinowitz_Periodic_solutions_of_Hamiltonian_systems_on_a_prescribed_enery_surface} Rabinowitz used the Rabinowitz action functional to prove existence of periodic orbits on star-shaped hypersurfaces in $\R^{2n}$. This fundamental work was one of the motivations for Weinstein to state his famous conjecture on the existence of Reeb orbits, \cite{Weinstein_The_conjecture}.

\subsubsection{The unperturbed Rabinowitz action functional and Reeb dynamics} 

\subsubsection*{\underline{Exact symplectic manifolds}}

Let $(M,\om=d\lambda)$ be an exact symplectic manifold, for example $(\R^{2n},\om_0)$ or a cotangent bundle $(T^*B,\om_{std})$ each with its canonical symplectic form. We fix an autonomous Hamiltonian, i.e.~a smooth time-independent function $F:M\to\R$. The Hamiltonian vector field $X_F$ of $F$ is defined implicitly by
\beq
\iota_{X_F}\om=dF\;.
\eeq
Since $F$ is autonomous the Hamiltonian vector field $X_F$ is tangent to level sets of $F$ and therefore its flow $\phi_F^t:M\pf M$ leaves level sets invariant. This means that the energy $F$ is preserved under the flow $\phi_F^t$.

Let $\L:=C^\infty(S^1,M)$, $S^1=\R/\Z$, be the loop space of $M$. The Rabinowitz action functional is defined as follows:
\bea
\A^F:\L\times\R&\pf\R\\
(v,\eta)&\mapsto \A^F(v,\eta):=-\int_{S^1}v^*\lambda-\eta\int_0^1F\big(v(t)\big)dt\;.
\eea
The real number $\eta$ can be thought of as a Lagrange multiplier. Hence critical points $(v,\eta)\in\Crit\A^F$ are critical points of the area functional restricted to the space of loops with $F$-mean value zero. They are solutions of
\beq
\left\{
\begin{aligned}
&\dot{v}(t)=\eta X_F\big(v(t)\big),\;\forall t\in S^1\\
&\int_0^1F\big(v(t)\big)dt=0\;.
\end{aligned}
\right.
\eeq
The first equation can be integrated to $v(t)=\phi_F^{\eta t}(v(0))$ and thus, by preservation of energy, the critical point equation is equivalent to 
\beq
\left\{
\begin{aligned}
&\dot{v}(t)=\eta X_F\big(v(t)\big),&&\forall t\in S^1\\
&v(t)\in F^{-1}(0),&&\forall t\in S^1\;.
\end{aligned}
\right.
\eeq
Hence, critical points of $\A^F$ correspond to periodic orbits of $X_F$ with period $\eta$ and lie on the energy hypersurface $\Sigma:=F^{-1}(0)$. Here, the period $\eta$ is understood in a generalized sense; it is allowed to be negative in which case the periodic orbit is traversed backwards. Moreover, if the period is zero then $v$ is constant and corresponds to a point on the energy hypersurface $F^{-1}(0)$.

If $\Sigma$ is a regular hypersurface for two functions $F,\widetilde{F}:M\to\R$, $\Sigma=\widetilde{F}^{-1}(0)=F^{-1}(0)$, then critical points of $\A^{\widetilde{F}}$ agree up to reparametrization with critical points of $\A^F$. In fact, they are closed characteristics of the canonical line bundle $\ker\om|_\Sigma\to\Sigma$ or constant.

It is interesting to compare the critical points of the Rabinowitz action functional $\A^F$ to critical points of the action functional $\A_F$ of classical mechanics 
\bea
\A_F:\L&\pf\R\\
v&\mapsto \A_F(v):=-\int_{S^1}v^*\lambda-\int_0^1F\big(v(t)\big)dt\;.
\eea
Critical points $v\in\Crit\A_F$ are 1-periodic solutions of
\beq
\dot{v}(t)= X_F\big(v(t)\big)\;.
\eeq
In this case the period of $v$ is fixed but the energy is arbitrary.

\subsubsection*{\underline{Symplectically aspherical manifolds}}
A symplectic manifold $(M,\om)$ is called symplectically aspherical if the homomorphism $I_\om:\pi_2(M)\to\R$ obtained by integrating $\om$ over a smooth representative vanishes identically. This holds for example if $(M,\om=d\lambda)$ is an exact symplectic manifold. In this situation the Rabinowitz action functional can still be defined on the component $\L_0\subset\L$ of the loop space of contractible loops. For $v\in\L_0$ there exists a filling disk $\bar{v}:\D^2\to M$ where $\D^2=\{z\in\C\mid|z|\leq1\}$ and $\bar{v}(e^{2\pi it})=v(t)$. In this case we set
\bea
\A^F:\L_0\times\R&\pf\R\\
(v,\eta)&\mapsto \A^F(v,\eta):=-\int_{\D^2}\bar{v}^*\om-\eta\int_0^1F\big(v(t)\big)dt\;.
\eea
Due to the symplectic asphericity the definition of $\A^F$ does not depend on the choice of the filling disk. If in addition $M$ is symplectically atoroidal, i.e.~$\int_{T^2}f^*\om=0$, $\forall f:T^2\to M$, the Rabinowitz action functional $\A^F$ can be extended to the whole loop space $\L$. An interesting class of symplectically atoroidal manifolds are certain twisted cotangent bundles. A twisted cotangent bundle is $(T^*B,\om_{std}+\tau^*\sigma)$, where $\tau:T^*B\to B$ is the projection and such that $\sigma\in\Om^2(B)$ is closed. If the pull-back of $\sigma$ to the universal cover $\widetilde{B}$ has a bounded primitive then $(T^*B,\om_{std}+\tau^*\sigma)$ is symplectically atoroidal. This fact was used by Merry in \cite{Merry_On_the_RFH_of_twisted_cotangent_bundles}.

\subsubsection{The perturbed Rabinowitz action functional and global Hamiltonian perturbations} \label{sec:perturbed_Rabinwitz_action_functional}

Since $F:M\to\R$ is autonomous the energy hypersurface $\Sigma=F^{-1}(0)$ is preserved under $\phi_F^t$. Therefore, $\Sigma$ is foliated by leaves $L_x:=\{\phi_F^t(x)\mid t\in\R\}$, $x\in \Sigma$. It is a challenging problem to compare the system $F$ before and after a global perturbation occurring in the time interval $[0,1]$. Such a perturbation is described by a function $H:M\times[0,1]\to\R$. Moser observed in \cite{Moser_A_fixed_point_theorem_in_symplectic_geometry} that it is not possible to destroy all trajectories of the unperturbed system if the perturbation is sufficiently small, that is, there exists $x\in \Sigma$ 
\beq
\phi_H^1(x)\in L_x\;.
\eeq
Such a point $x$ is referred to as a leaf-wise intersection point. Equivalently, there exists $(x,\eta)\in \Sigma\times\R$ such that
\beq
\phi_F^{\eta}(x)=\phi_{H}^1(x)\;.
\eeq
We point out that the time shift $\eta$ is uniquely defined by the above equation unless the leaf $L_x$ is closed. If the time shift is negative then the perturbation moves the system back into its own past. Likewise, if the time shift is positive the perturbation moves the system forward into its own future.

Already the existence problem for leaf-wise intersection points is highly non-trivial. The search for leaf-wise intersection points was initiated by Moser in \cite{Moser_A_fixed_point_theorem_in_symplectic_geometry} and pursued further in 
\cite{Banyaga_On_fixed_points_of_symplectic_maps,
Hofer_On_the_topological_properties_of_symplectic_maps,
Ekeland_Hofer_Two_symplectic_fixed_point_theorems_with_applications_to_Hamiltonian_dynamics,
Ginzburg_Coisotropic_intersections,
Dragnev_Symplectic_rigidity_symplectic_fixed_points_and_global_perturbations_of_Hamiltonian_systems,
Albers_Frauenfelder_Leafwise_intersections_and_RFH,
Ziltener_coisotropic, Albers_Frauenfelder_Leafwise_Intersections_Are_Generically_Morse,
Albers_McLean_SH_and_infinitley_many_LI,
Gurel_leafwise_coisotropic_intersection,Kang_Existence_of_leafwise_intersection_points_in_the_unrestricted_case, Albers_Frauenfelder_Remark_on_a_Thm_by_Ekeland_Hofer,
Albers_Frauenfelder_Spectral_invariants_in_RFH, 
Merry_On_the_RFH_of_twisted_cotangent_bundles}. We refer to \cite{Albers_Frauenfelder_Leafwise_Intersections_Are_Generically_Morse} for a brief history.

In \cite{Albers_Frauenfelder_Leafwise_intersections_and_RFH} Albers -- Frauenfelder developed a variational approach to the study of leaf-wise intersection points. 

\begin{Def}\label{def:Moser_pair}
A pair $\Mp=(F,H)$ of Hamiltonians $F,H:M\times S^1\pf R$ is called a Moser pair if it satisfies
\beq
F(\cdot,t)=0\quad\forall t\in[\tfrac12,1]\qquad\text{and}\qquad H(\cdot,t)=0\quad\forall t\in[0,\tfrac12]\;,
\eeq
and $F$ is of the form $F(x,t)=\rho(t)f(x)$ for some smooth map $\rho:S^1\to [0,1]$ with $\int_0^1\rho(t) dt=1$ and $f:M\pf\R$. 
\end{Def}
If we start with an autonomous Hamiltonian $\widehat{F}:M\to\R$ and an arbitrary $\widehat{H}:M\times S^1\to\R$ we can find $F,H:M\times S^1\to\R$ such that the Hamiltonian flows $\phi_F^t,\phi_H^t$ are time reparametrizations of the flows $\phi_{\widehat{F}}^t,\phi_{\widehat{H}}^t$ and such that $(F,H)$ is a Moser pair.

For simplicity we assume that $(M,\om=d\lambda)$ is an exact symplectic manifold. For a Moser pair $\Mp=(F,H)$ the perturbed Rabinowitz action functional is defined by
\bea
\A^{(F,H)}\equiv\A^\Mp:\L\times\R&\pf\R\\
(v,\eta)&\mapsto-\int_0^1v^*\lambda-\int_0^1H(v,t)dt-\eta\int_0^1F(v,t)dt\;.
\eea
Critical points $(v,\eta)$ of $\A^\Mp$ are solutions of 
\beq\label{eqn:critical_points_eqn}
\left\{
\begin{aligned}
&\partial_tv=\eta X_{F}(v,t)+X_H(v,t),\;\forall t\in S^1\\
&\int_0^1F(v,t)dt=0
\end{aligned}\right.
\eeq
In \cite{Albers_Frauenfelder_Leafwise_intersections_and_RFH} we observed that critical points of the perturbed Rabinowitz action functional $\A^\Mp$ give rise to leaf-wise intersection points. 

\begin{Prop}[\cite{Albers_Frauenfelder_Leafwise_intersections_and_RFH}]\label{prop:critical_points_give_LI}
Let $(v,\eta)$ be a critical point of $\A^\Mp$ then $x:=v(\tfrac12)\in f^{-1}(0)$ and
\beq
\phi_H^1(x)\in L_x
\eeq
thus, $x$ is a leaf-wise intersection. Moreover, the map $\Crit\A^\Mp\to\{\text{leaf-wise intersections}\}$ is injective unless there exists a leaf-wise intersection point on a closed leaf, see figure \ref{fig:non-unique}.
\end{Prop}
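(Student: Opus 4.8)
The plan is to exploit the one structural feature of a Moser pair that makes the statement essentially a reformulation of \eqref{eqn:critical_points_eqn}: by Definition \ref{def:Moser_pair} the Hamiltonian $F$ is supported in the time interval $[0,\tfrac12]$ and $H$ in $[\tfrac12,1]$, so on each half of the circle one of the two vector fields in the first line of \eqref{eqn:critical_points_eqn} vanishes and the equation reduces to an autonomous flow up to time reparametrization. On $[\tfrac12,1]$ one has $X_F(\cdot,t)=0$, so $\partial_t v=X_H(v,t)$ with $v(\tfrac12)=x$; since $X_H(\cdot,t)\equiv0$ on $[0,\tfrac12]$ the time-ordered flow satisfies $\phi_H^t=\id$ for $t\le\tfrac12$, hence $v(t)=\phi_H^t(x)$ on $[\tfrac12,1]$ and in particular $v(1)=\phi_H^1(x)$. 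On $[0,\tfrac12]$ one has $X_H(\cdot,t)=0$ and $X_F(\cdot,t)=\rho(t)X_f$ (with $X_f$, $\phi_f$ denoting the Hamiltonian vector field and flow of $f$), so $\partial_t v=\eta\rho(t)X_f(v)$; introducing the reparametrization $s(t):=\int_0^t\rho$, which increases from $0$ to $\int_0^1\rho=1$ as $t$ runs from $0$ to $\tfrac12$, the solution is $v(t)=\phi_f^{\eta s(t)}(v(0))$, so that $x=v(\tfrac12)=\phi_f^{\eta}(v(0))$. This is exactly where the normalization $\int_0^1\rho=1$ of Definition \ref{def:Moser_pair} is used.

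Next I would close the loop and check the energy condition. Periodicity $v(0)=v(1)$ together with the two computations above gives $x=\phi_f^{\eta}(\phi_H^1(x))$, i.e.\ $\phi_H^1(x)=\phi_f^{-\eta}(x)\in L_x$, which is the asserted leaf-wise intersection property. For $x\in f^{-1}(0)$: since $f$ is preserved by its own flow, the formula $v(t)=\phi_f^{\eta s(t)}(v(0))$ on $[0,\tfrac12]$ gives $f(v(t))=f(v(0))$ for $t\in[0,\tfrac12]$, so the constraint $0=\int_0^1F(v,t)\,dt=\int_0^{1/2}\rho(t)f(v(t))\,dt=f(v(0))$ forces $f(v(0))=0$ and hence $f(x)=f(\phi_f^{\eta}(v(0)))=0$.

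Finally, for injectivity I would argue as follows: if $(v_1,\eta_1)$ and $(v_2,\eta_2)$ both map to $x$, then by the first paragraph they agree on $[\tfrac12,1]$, so $v_1(0)=v_2(0)=\phi_H^1(x)$, and then $\phi_f^{\eta_1}(\phi_H^1(x))=\phi_f^{\eta_2}(\phi_H^1(x))=x$, so $\phi_f^{\eta_1-\eta_2}$ fixes $\phi_H^1(x)$, whose $\phi_f$-orbit is $L_x$; unless $L_x$ is a closed leaf this forces $\eta_1=\eta_2$, and then $v_1=v_2$ on all of $S^1$ since they solve the same ODE with the same initial value. I do not expect a genuine obstacle here --- the argument is a direct unwinding of \eqref{eqn:critical_points_eqn} --- and the only points demanding care are the bookkeeping of time-ordered Hamiltonian flows and the reparametrization $s$ in the first paragraph, together with pinning down that ``closed leaf'' in the last clause is precisely the case in which the time shift $\eta$ is not determined by $x$, which is why distinct critical points may then share a leaf-wise intersection point.
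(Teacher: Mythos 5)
Your proof is correct and is precisely the standard argument (the one in the cited Albers--Frauenfelder paper, which this survey does not reproduce): split the circle according to the disjoint time supports of $F$ and $H$, integrate the two autonomous pieces with the reparametrization $s(t)=\int_0^t\rho$, use $\int_0^1\rho=1$ and conservation of $f$ to get $x\in f^{-1}(0)$ and $\phi_H^1(x)=\phi_f^{-\eta}(x)\in L_x$, and read off injectivity from the uniqueness of the time shift on non-closed leaves. No gaps; the only cosmetic point is to note that on the regular level set $f^{-1}(0)$ the vector field $X_f$ does not vanish, so $\phi_f^{\eta_1-\eta_2}$ fixing $\phi_H^1(x)$ with $\eta_1\neq\eta_2$ indeed means the leaf is closed.
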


\begin{figure}[htb]
\input{leafwise1.pstex_t}\caption{}\label{fig:non-unique}
\end{figure}

\subsubsection{First properties}\label{sec:first_properties}

In this paragraph we make the \underline{\textbf{wrong}} assumption that Rabinowitz Floer homology 
\beq
``\RFHb(\Sigma,M)\text{''}\equiv``\HM(\A^F)\text{''},
\eeq 
i.e.~the semi-infinite dimensional Morse homology (in the sense of Floer) for the Rabinowitz action functional, can always be constructed and is independent of the defining function $F$ for a regular hypersurface $\Sigma$ and invariant under Hamiltonian perturbations as described above. Thus, $\RFHb(\Sigma,M)$ should have the following properties:
\begin{enumerate}
\item $\RFHb(\Sigma,M)\cong\H_*(\Sigma)$ if there are no closed characteristics on $\Sigma$. Indeed, then the only critical points of $\A^F$ correspond to constant loops in $\Sigma$, see above. Then $\A^F$ is Morse-Bott with $\Crit\A^F\cong\Sigma$ and $\A^F|_{\Crit\A^F}=0$ since $\Sigma$ is a regular hypersurface.
\item If $\Sigma$ is displaceable, that is, $\exists H:M\times S^1\to\R$ such that $\phi_H^1(\Sigma)\cap\Sigma=\emptyset$, then $\RFHb(\Sigma,M)\cong0$. This follows from invariance under Hamiltonian perturbations
\beq
``\RFHb(\Sigma,M)\text{''}\equiv``\HM(\A^F)\text{''}\cong``\HM(\A^{(F,H)})\text{''}\cong0
\eeq
together with the observation that $\Crit\A^{(F,H)}=\emptyset$.
\end{enumerate}
The counterexamples to the Hamiltonian Seifert conjecture, see Ginzburg -- G\"urel \cite{Ginzburg_Gurel_A_C_2_smooth_counterexample_to_the_Hamiltonian_Seifert_conjecture} and the literature cited therein, are closed hypersurfaces $\Sigma\subset\R^{2n}$ with no closed characteristics. In particular, since any compact subset of $\R^{2n}$ is displaceable we arrive at the contradiction
\beq
\H_*(\Sigma)\cong``\RFHb(\Sigma,M)\text{''}\cong0\;.
\eeq 
The reason behind the fact that Rabinowitz Floer homology cannot be defined in full generality is that the moduli spaces of gradient flow lines do not have the necessary compactness properties. However, if certain topological/dynamical assumptions1 on $(\Sigma,M)$ are made the desired compactness properties can be established. This is described in the next section.

\subsection{Gradient flow lines}

\subsubsection{Gradient flow equation}

To compute the gradient of the Rabinowitz action functional we need to specify a metric on $\L\times\R$. We take the product of an $L^2$-metric on $\L$ and the standard metric on $\R$. In order to specify the $L^2$-metric on $\L$ we choose an $S^1$-family $J\equiv\{J_t\}_{t\in S^1}$ of $\om$-compatible almost complex structures on $M$. For $(\hat{v}_1,\hat{\eta}_1),(\hat{v}_2,\hat{\eta}_2)\in T_{(v,\eta)}\big(\L\times\R\big)=\Gamma(v^*TM)\times\R$ we set
\beq\label{eqn:L2_metric_RFH}
\mathfrak{m}\big((\hat{v}_1,\hat{\eta}_1),(\hat{v}_2,\hat{\eta}_2)\big)\equiv\mathfrak{m}_{J}\big((\hat{v}_1,\hat{\eta}_1),(\hat{v}_2,\hat{\eta}_2)\big):=\int_0^1\om\big(\hat{v}_1,J_t(v)\hat{v}_2\big)dt+\hat{\eta}_1\hat{\eta}_2\;.
\eeq
Then the gradient of $\A^F$ is
\beq
\nabla^\mathfrak{m}\A^F(v,\eta)=\left(
\begin{aligned}
J_t&(v)\big(\p_t v-\eta X_F(v)\big)\\
&-\int_0^1F(v)dt
\end{aligned}\right)\;.
\eeq
A gradient flow line is formally a map $w=(v,\eta)\in C^\infty(\R,\L\times \R)$ satisfying
\beq
\p_s w(s)+\nabla^\mathfrak{m}\A^F\big(w(s)\big)=0
\eeq
in the sense of Floer, that is $v:\R\times S^1\to M$, $\eta:\R\to\R$ satisfy
\beq
\left\{
\begin{aligned}
&\p_sv+J_t(v)\big(\p_t v-\eta X_F(v)\big)=0\\
&\p_s\eta-\int_0^1F(v)dt=0\;.
\end{aligned}\right.
\eeq
If we replace $\A^F$ by the perturbed Rabinowitz action functional $\A^{(F,H)}$ then the corresponding gradient flow equation changes to
\beq
\left\{
\begin{aligned}
&\p_sv+J_t(v)\big(\p_t v-\eta X_F(t,v)-X_H(t,v)\big)=0\\
&\p_s\eta-\int_0^1F(t,v)dt=0\;.
\end{aligned}\right.
\eeq

\subsubsection{Compactness}\label{sec:compactness}

\subsubsection*{\underline{The case of restricted contact type}}

Let $(W,\om=d\lambda)$ be a compact, exact symplectic manifold with contact type boundary $\Sigma=\p W$, that is, the Liouville vector field $L$, defined by $\iota_L\om=\lambda$, points outward along $\Sigma$. In particular, $(\Sigma,\alpha:=\lambda|_\Sigma)$ is a contact manifold.\footnote{
In general, a closed hypersurface in a symplectic manifold $(W,\om)$ is of \textit{restricted contact type} if there exists a globally defined primitive $\lambda$ of $\om$ with Liouville vector field $L$ satisfying $L\pitchfork \Sigma$. If $\lambda$ only exists locally near $\Sigma$ then $\Sigma$ is of \textit{contact type}.
}
 We denote by $M$ the completion of $W$ obtained by attaching the positive half of the symplectization of $\Sigma$, that is, $(M=W\cup_{\Sigma}(\Sigma\times\R_+), \om=d\lambda)$ where $\lambda$ is extended over $\Sigma\times\R_+ $ by $e^r\alpha$, $r\in\R_+$.  We assume that $F:M\to\R$ is a defining function for $\Sigma$ such that $dF$ has compact support. The main compactness theorem is as follows:
\begin{Thm}\label{thm:action_bounds_imply_compactness}
In the above situation let $w_n=(v_n,\eta_n)$ be a sequence of gradient flow lines of $\A^\Mp$ for which there exists $a<b$ such that
\beq
a\leq\A^\Mp\big(w_n(s)\big)\leq b\qquad\forall s\in\R\;.
\eeq
Then for every reparametrisation sequence $\sigma_n\in\R$ the sequence $w_n(\cdot+\sigma_n)$ has a subsequence which converges in $C^\infty_\mathrm{loc}(\R,\L\times\R)$.
\end{Thm}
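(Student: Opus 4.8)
The plan is to run the standard Floer compactness argument for the gradient flow of $\A^\Mp$; the only genuinely Rabinowitz-specific inputs are the a priori control of the Lagrange multipliers $\eta_n$ and the confinement of the cylinders $v_n$ to a fixed compact subset of $M$, and both are where the restricted contact type hypothesis is used. First I would turn the action bounds into an energy bound: along a gradient flow line $\tfrac{d}{ds}\A^\Mp\big(w_n(s)\big)=-\|\p_s w_n(s)\|^2$, so integrating over $\R$ and inserting $a\le\A^\Mp(w_n(s))\le b$ gives
\[
E(w_n):=\int_{-\infty}^{\infty}\|\p_s w_n(s)\|^2\,ds\;\le\;b-a
\]
uniformly in $n$. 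Both $E(w_n)$ and the action bounds are invariant under the shift $w_n\mapsto w_n(\cdot+\sigma_n)$, so I may assume from now on that $\sigma_n\equiv0$ and prove that $w_n$ itself has a subsequence converging in $C^\infty_{\mathrm{loc}}(\R,\L\times\R)$.

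Next I would prove confinement: there is a compact set $K\subset M$ with $v_n(\R\times S^1)\subset K$ for all $n$. On the symplectization end $\Sigma\times\R_+$ the cutoff makes $X_F$ vanish outside a compact set, and likewise $X_H$ for an admissible perturbation $H$, so wherever $v_n$ takes values beyond that set it solves the honest $J$-holomorphic curve equation $\p_sv_n+J_t(v_n)\p_tv_n=0$. For a cylindrical $J$ the radial coordinate $r\circ v_n$ is then subharmonic there, hence admits no interior maximum, and the usual convexity/maximum-principle argument for finite-energy curves in a completed Liouville domain forbids $v_n$ from escaping to $r=+\infty$; this yields the uniform $C^0$-bound.

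The heart of the matter --- and the step I expect to be the main obstacle --- is the uniform bound $\|\eta_n\|_{L^\infty}\le C'$. For this I would first establish the key a priori estimate: there exist $\epsilon>0$ and $C>0$, depending only on $a$, $b$ and $K$, such that if $(v,\eta)$ satisfies $v(S^1)\subset K$, $\|\nabla\A^\Mp(v,\eta)\|\le\epsilon$ and $a\le\A^\Mp(v,\eta)\le b$, then $|\eta|\le C$. The mechanism is that, expressing $\p_tv-\eta X_F(v,t)-X_H(v,t)$ and $\int_0^1F(v,t)\,dt$ through the (small) gradient, the value $\A^\Mp(v,\eta)$ equals $-\eta\int_0^1\lambda\big(X_F(v,t)\big)\,dt$ modulo error terms that are bounded in terms of $K$ and of size $O(\epsilon|\eta|)$, while $-\lambda(X_F)=dF(L)$ is bounded uniformly away from zero on a neighbourhood of $\Sigma=F^{-1}(0)$ because the Liouville field $L$ is transverse to the regular level set $\Sigma$ --- which is exactly the role played by the restricted contact type assumption. (The complementary case, in which $v$ ranges outside that neighbourhood, is disposed of by a separate but similar argument.) Granting the estimate, Chebyshev's inequality applied to the energy bound shows that $\{\,s:\|\p_s w_n(s)\|>\epsilon\,\}$ has total length $\le(b-a)/\epsilon^2$ uniformly in $n$; on its complement $|\eta_n(s)|\le C$, and since $|\p_s\eta_n(s)|=\big|\int_0^1F(v_n,t)\,dt\big|\le\sup_{K\times S^1}|F|$, a mean-value argument propagates the bound to all of $\R$.

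Finally, with $v_n$ confined to $K$ and $\eta_n$ uniformly bounded, the right-hand side of the gradient flow equation is uniformly $C^0$, so the only obstruction to uniform $C^1_{\mathrm{loc}}$-bounds for $w_n$ is bubbling of $J$-holomorphic spheres; but $\om=d\lambda$ is exact, so any such sphere has vanishing $\om$-area and must be constant --- no bubbling occurs. Concretely, if $\sup_{[-T,T]\times S^1}|\p_s w_n|\to\infty$ along a subsequence, rescaling at the maxima produces in the limit a non-constant finite-energy $J$-holomorphic plane (the $\eta X_F$ and $X_H$ terms scale away), which by removal of singularities closes up to a non-constant sphere of zero $\om$-area, a contradiction. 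Hence $\|w_n\|_{C^1([-T,T]\times S^1)}$ is bounded for every $T$; elliptic bootstrapping for this Cauchy--Riemann type system upgrades this to uniform $C^k_{\mathrm{loc}}$-bounds for all $k$, and Arzel\`{a}--Ascoli extracts the desired subsequence converging in $C^\infty_{\mathrm{loc}}(\R,\L\times\R)$. As stressed earlier, the $L^\infty$-bound on the Lagrange multipliers is the crux and is precisely what fails in general.
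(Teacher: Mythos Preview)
Your proof is correct and follows essentially the same route as the paper: reduce to the three uniform $C^0$ bounds on $v_n$, its derivatives, and $\eta_n$, obtain the first via the maximum principle on the cylindrical end, the second via bubbling-off and exactness of $\om$, and the third via what the paper isolates as the ``Fundamental Lemma'' --- the a priori estimate $|\eta|\le C(|\A^\Mp(v,\eta)|+1)$ whenever the gradient is small, whose proof hinges (as you say) on $\lambda(X_F)$ being bounded away from zero near $\Sigma$ by the restricted contact type assumption. Your Chebyshev-plus-propagation step from the Fundamental Lemma to the global $L^\infty$ bound on $\eta_n$ is exactly the mechanism alluded to in the paper's Remark~\ref{rmk:Fundamental_Lemma_implies_uniform_bounds_on_eta}.
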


The proof follows from standard arguments in Floer theory as soon as we establish
\begin{enumerate}
\item a uniform $C^0(\R)$ bound on $\eta_n$,
\item a uniform $C^0(\R\times S^1)$ bound on $v_n$,
\item a uniform $C^0(\R\times S^1)$ bound on the derivatives of $v_n$.
\end{enumerate} 
Assertion (2) follows from a maximum principle since $M$ is convex at infinity. Assertion (3) follows from standard bubbling-off analysis for holomorphic spheres in Floer theory together with the assumption that $(M,\om)$ is exact. Indeed, a non-constant holomorphic sphere in $M$ has to exist if the derivatives of $v_n$ explode . This contradicts Stokes theorem since $\om$ is exact. Obviously, symplectic asphericity of $M$ would be sufficient.

An interesting feature of gradient flow lines is that they have an infinite amount of flow time but a finite amount of energy available. This leads to the paradoxical conclusion that it is favorable for a gradient flow line to run slowly in order to get far away. The following ``Fundamental Lemma'' prevents that for slowly running gradient flow lines $w=(v,\eta)$ the Lagrange multiplier $\eta$ explodes, i.e.~slowly running gradient flow lines cannot get too far.

\begin{FunLemma}[restricted contact type case]
There exists a constant $C>0$ such that for all $(v,\eta)\in\L\times\R$
\beq\label{eqn:fund_lemma}
||\nabla^\mathfrak{m}\A^\Mp(v,\eta)||<\frac1C\quad\Longrightarrow\quad |\eta|\leq C(|\A^\Mp(v,\eta)|+1)
\eeq
\end{FunLemma}

\begin{Rmk}\label{rmk:Fundamental_Lemma_implies_uniform_bounds_on_eta}
With help of the Fundamental Lemma it can be proved that along gradient flow lines the Lagrange multiplier is bounded in terms of asymptotic action values. This depends heavily on the fact that the action $\A^\Mp$ is decreasing along gradient flow lines. For details we refer to Cieliebak -- Frauenfelder, \cite{Cieliebak_Frauenfelder_Restrictions_to_displaceable_exact_contact_embeddings}. 
\end{Rmk}

\begin{Rmk}
The assumption of restricted contact type enters the proof of the Fundamental Lemma through the following observation. If we normalize $F$ such that $X_F|_\Sigma$ is the Reeb vector field of $\alpha$ then in the unperturbed case at a critical points $(v,\eta)\in\Crit\A^F$ we have
\beq\label{eqn:critical_value_equal_eta}
\A^F(v,\eta)=-\eta\;.
\eeq 
Since $v(t/\eta)$ is a $\eta$-periodic Reeb orbit equation \eqref{eqn:critical_value_equal_eta} can be thought of as a period-action equality for Reeb orbits. Consequently, equation \eqref{eqn:fund_lemma} can be interpreted as a period-action inequality for almost Reeb orbits.
\end{Rmk}

\begin{Rmk}
For the unperturbed Rabinowitz action functional the Fundamental Lemma and Theorem \ref{thm:action_bounds_imply_compactness} have been proved in \cite{Cieliebak_Frauenfelder_Restrictions_to_displaceable_exact_contact_embeddings} and for the perturbed Rabinowitz action functional in \cite{Albers_Frauenfelder_Leafwise_intersections_and_RFH}.
\end{Rmk}

\begin{Rmk}
If the defining function $F$ and the almost complex structure $J$ are adapted to the symplectization structure near the hypersurface $\Sigma$ then the constant $C$ in the Fundamental Lemma can be chosen universally, i.e.~independent of $F$, $J$, and the contact structure. This has been used in the definition of spectral invariants for Rabinowitz Floer homology in \cite{Albers_Frauenfelder_Spectral_invariants_in_RFH}.
\end{Rmk}

\begin{Rmk}
The assumption that the differential of the defining function $F:M\to\R$ has compact support can be replaced by an appropriate asymptotic growth condition without changing the Rabinowitz Floer homology. This property plays an important role in the proof relating Rabinowitz Floer homology to symplectic homology and loop space topology, see \cite{Cieliebak_Frauenfelder_Oancea_Rabinowitz_Floer_homology_and_symplectic_homology,
Abbondandolo_Schwarz_Estimates_and_computations_in_Rabinowitz_Floer_homology,
Merry_On_the_RFH_of_twisted_cotangent_bundles}.
\end{Rmk}

\begin{Rmk}
Let $(M,\om)$ be an exact symplectic manifold which is convex at infinity. If $\Sigma\subset M$ is a compact, bounding hypersurface of restricted contact type then the symplectization of $\Sigma$ embeds into $M$. It has been examined by Cieliebak -- Frauenfelder -- Oancea in \cite{Cieliebak_Frauenfelder_Oancea_Rabinowitz_Floer_homology_and_symplectic_homology} under which conditions gradient flow lines of the Rabinowitz action functional do not leave the positive part of the symplectization of $\Sigma$. The upshot is that $M$ can be replaced by the completion of the region bounded by $\Sigma$ without changing the Rabinowitz Floer homology.
\end{Rmk}

\subsubsection*{\underline{The case of stable hypersurfaces}}

Let $(M,\om)$ be a symplectically aspherical symplectic manifold. We assume further that $(M,\om)$ is convex at infinity or geometrically bounded to guarantee $C^0$-bounds for gradient flow lines. A closed hypersurface  $\Sigma\subset M$ is called stable if there exists $\alpha\in\Om^1(\Sigma)$ such that
\beq
\alpha\wedge\om^{m-1}|_{\Sigma}>0\quad\text{and}\quad \ker\om\subset\ker d\alpha 
\eeq
where $m=\tfrac12\dim M$. The 1-form $\alpha$ is called a stabilizing 1-form. $\Sigma$ is called of contact type if $\om|_\Sigma=d\alpha$. Since $\ker\om|_\Sigma\to\Sigma$ is a rank-1-distribution the Reeb vector field $R$ on $\Sigma$ is uniquely defined by
\beq
\iota_R\alpha=1 \quad\text{and}\quad \iota_R\om|_\Sigma=0\;.
\eeq
The Rabinowitz action functional $\A^F$ is defined as above. However, the Fundamental Lemma does not carry over to the stable case since already at critical points the period-action equality fails to hold for $\A^F$ in general. We define a modified Rabinowitz action functional by
\bea
\widehat{\A}^F:\L\times\R&\pf\R\\
(v,\eta)&\mapsto \widehat{\A}^F(v,\eta):=-\int_{S^1}v^*\lambda-\eta\int_0^1F\big(v(t)\big)dt\;.
\eea
where $\lambda\in\Om^1(M)$ satisfies $\lambda|_\Sigma=\alpha$. Then the period-action equality holds for $\widehat{\A}^F$. If $\lambda$ is chosen appropriately then the Fundamental Lemma in the stable case is as follows.

\begin{FunLemma}[stable case]
There exists a constant $C>0$ such that for all $(v,\eta)\in\L\times\R$
\beq
||\nabla^\mathfrak{m}\A^F(v,\eta)||<\frac1C\quad\Longrightarrow\quad |\eta|\leq C(|\widehat{\A}^F(v,\eta)|+1)
\eeq
\end{FunLemma}

\noindent This was proved by Cieliebak -- Frauenfelder -- Paternain in \cite{Cieliebak_Frauenfelder_Paternain_Symplectic_Topology}. Establishing uniform bounds on the Lagrange multiplier along gradient flow lines in terms of asymptotic action values as described in Remark \ref{rmk:Fundamental_Lemma_implies_uniform_bounds_on_eta} in the stable case involves additional arguments. This is due to the fact that $\widehat{\A}^F$ is not necessarily decreasing along gradient flow lines of $\A^F$. The crucial observation is that 
\beq\label{eqn:aux_fctl_is_non_increasing}
\A_\alpha(v,\eta):=\A^F(v,\eta)-\widehat{\A}^F(v,\eta)=\int v^*\lambda-\int \bar{v}^*\om:\L_0\times\R\pf\R
\eeq
is non-increasing along gradient flow lines of $\A^F$, i.e.~$\A_\alpha$ serves as a Lyapunov function for the gradient flow of $\A^F$. This immediately implies that if $w$ is a gradient flow line of $\A^F$ with $\lim_{s\to\pm\infty}w(s)=:w_\pm\in\Crit\A^F$ we can estimate
\beq
\A^F(w_+)-\A_\alpha(w_-)\leq\widehat{\A}^F(w(s))\leq\A^F(w_-)-\A_\alpha(w_+)\;.
\eeq
Proving that $\A_\alpha$ is non-increasing along gradient flow lines involves the bilinear form
\beq
\widehat{\mathfrak{m}}\big((\hat{v}_1,\hat{\eta}_1),(\hat{v}_2,\hat{\eta}_2)\big):=\int_0^1d\lambda\big(\hat{v}_1,J_t(v)\hat{v}_2\big)dt+\hat{\eta}_1\hat{\eta}_2\;.
\eeq
 for $(\hat{v}_1,\hat{\eta}_1),(\hat{v}_2,\hat{\eta}_2)\in T_{(v,\eta)}\big(\L\times\R\big)=\Gamma(v^*TM)\times\R$. We note that in general $\widehat{\mathfrak{m}}$ is neither symmetric nor positive definite. 
\begin{Prop}\label{prop:properties_of_m_hat}
If $F$, $J$, and $\lambda$ are chosen appropriately, see \cite{Cieliebak_Frauenfelder_Paternain_Symplectic_Topology}, the following two properties hold
\bea
&(1) \quad\mathfrak{m}-\widehat{\mathfrak{m}}\geq0\\
&(2) \quad d\widehat{\A}^F(\,\cdot\,)=\widehat{\mathfrak{m}}(\nabla^{\mathfrak{m}}\A^F,\cdot)\;.
\eea
\end{Prop}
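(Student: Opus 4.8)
Write-up of the plan.

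\medskip

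\noindent\emph{Proof proposal.} The plan is to first fix the auxiliary data $(F,J,\lambda)$ in the way dictated by the stable structure near $\Sigma$, and then to verify the two assertions by direct computation: assertion (2) is a variational identity, and assertion (1) reduces to a pointwise inequality. For the data I would use the stable tubular neighbourhood theorem: there is an embedding $\Sigma\times(-\epsilon,\epsilon)\hookrightarrow M$ with $\Sigma=\{r=0\}$ in which $\om=\pi^*(\om|_\Sigma)+d\big(r\,\pi^*\alpha\big)$ (this is exactly where $\ker\om\subset\ker d\alpha$ is used). In this collar set $\lambda:=(r+1)\,\pi^*\alpha$, so that $\lambda|_\Sigma=\alpha$ and, after cancellation of the $dr$- and $r\,d\alpha$-terms, $\om-d\lambda=\pi^*\big(\om|_\Sigma-d\alpha\big)$; then extend $\lambda$ to a global $1$-form on $M$. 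Choose $F$ to be a defining function for $\Sigma$ depending only on $r$ near $\Sigma$, normalised so that $X_F|_\Sigma=R$ and with $dF$ supported inside the collar; then $X_F=-g'(r)R$ near $\Sigma$ and $X_F\equiv0$ elsewhere. Finally choose $J$ to be $\om$-compatible and adapted to the collar so that $\om-d\lambda$ is $J$-invariant (equivalently $d\lambda$ is) and $(\om-d\lambda)(\xi,J_t\xi)\geq0$ for every tangent vector $\xi$.

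Granting these choices, I would prove (2) by computing both sides. Differentiating $\widehat{\A}^F(v,\eta)=-\int_{S^1}v^*\lambda-\eta\int_0^1F(v)\,dt$, using Cartan's formula for the first term and $dF=\iota_{X_F}\om$ for the second, gives
\[ d\widehat{\A}^F(v,\eta)(\hat v,\hat\eta)=\int_0^1 d\lambda(\p_t v,\hat v)\,dt-\eta\int_0^1\om\big(X_F(v),\hat v\big)\,dt-\hat\eta\int_0^1F(v)\,dt . \]
On the other hand, since $\nabla^{\mathfrak{m}}\A^F(v,\eta)=\big(J_t(v)(\p_t v-\eta X_F(v)),\,-\int_0^1F(v)\,dt\big)$, the definition of $\widehat{\mathfrak{m}}$ gives
\[ \widehat{\mathfrak{m}}\big(\nabla^{\mathfrak{m}}\A^F(v,\eta),(\hat v,\hat\eta)\big)=\int_0^1 d\lambda\big(J_t(\p_t v-\eta X_F),J_t\hat v\big)\,dt-\hat\eta\int_0^1F(v)\,dt . \]
Now $J$-invariance of $d\lambda$ removes the two occurrences of $J_t$ from the integrand, and the identity $\iota_{X_F}d\lambda=\iota_{X_F}\om=dF$ converts the resulting $-\eta\int d\lambda(X_F,\hat v)$ term into $-\eta\int\om(X_F,\hat v)$; this identity holds everywhere on $M$ because $X_F\equiv0$ away from $\Sigma$, while in the collar $X_F=-g'(r)R$ and the one-line computation $\iota_R d\lambda=-dr=\iota_R\om$ holds (using $\iota_R\,dr=0$, $(\pi^*\alpha)(R)=1$, $\iota_R\om|_\Sigma=0$ and $\iota_R d\alpha=0$, the last being the stability condition). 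Comparing the two displays yields (2).

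For (1) I would observe that in the difference $\mathfrak{m}-\widehat{\mathfrak{m}}$ the two $\hat\eta$-contributions coincide and cancel, so that on the diagonal
\[ (\mathfrak{m}-\widehat{\mathfrak{m}})\big((\hat v,\hat\eta),(\hat v,\hat\eta)\big)=\int_0^1(\om-d\lambda)\big(\hat v,J_t(v)\hat v\big)\,dt , \]
whence $\mathfrak{m}-\widehat{\mathfrak{m}}\geq0$ is equivalent to the pointwise inequality $(\om-d\lambda)(\xi,J_t(p)\xi)\geq0$ for all $p\in M$, $t\in S^1$ and $\xi\in T_pM$ — which is precisely the adaptedness built into the choice of $J$.

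The hard part is the construction of the adapted data itself: one must produce a single $\om$-compatible $J$ for which $\om-d\lambda$ is simultaneously of type $(1,1)$ and non-negative, and this globally on $M$, not only near $\Sigma$. Near $\Sigma$ the obstruction is governed by the pulled-back closed $2$-form $\om-d\lambda=\pi^*(\om|_\Sigma-d\alpha)$, which has $\p_r$ in its kernel and whose eigenvalues relative to $\om$ can be kept in the admissible range after shrinking $\epsilon$ and, if necessary, replacing $\alpha$ by $c\alpha$ for small $c>0$ (still a stabilising $1$-form); away from the collar one takes $\lambda$ closed, so that $\om-d\lambda=\om$ there, and interpolates, matching $J$ compatibly. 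One also uses the asphericity of $(M,\om)$ to guarantee that $\widehat{\A}^F$, hence $d\widehat{\A}^F$, is well defined on the relevant component of the loop space. These are exactly the choices carried out in \cite{Cieliebak_Frauenfelder_Paternain_Symplectic_Topology}. \hfill$\square$
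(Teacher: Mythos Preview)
The survey paper does not actually prove this proposition: it only states the result and refers to \cite{Cieliebak_Frauenfelder_Paternain_Symplectic_Topology} for the choices of $F$, $J$, $\lambda$ and (implicitly) for the verification. So there is no ``paper's own proof'' here to compare against beyond the cited reference.

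Your proposal is correct and is precisely the kind of argument one finds in the cited source. The two computations are clean: for (2) you correctly reduce the comparison of $d\widehat{\A}^F$ and $\widehat{\mathfrak m}(\nabla^{\mathfrak m}\A^F,\cdot)$ to the two requirements $d\lambda(J\cdot,J\cdot)=d\lambda(\cdot,\cdot)$ and $\iota_{X_F}d\lambda=\iota_{X_F}\om$, and you verify the latter from the collar normal form (the check $\iota_R d\lambda=\iota_R\om=-dr$ is the point). For (1) you correctly observe that the $\hat\eta$-terms cancel and the inequality becomes the pointwise semipositivity $(\om-d\lambda)(\xi,J\xi)\geq0$, which together with $J$-invariance of $\om-d\lambda$ is exactly the ``$J$-semitameness of $\om-d\lambda$'' built into the adapted data.

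You are also right to flag that the nontrivial content lies entirely in the existence of such data: a global $\om$-compatible $J$ for which $\om-d\lambda$ is simultaneously $J$-invariant and $J$-nonnegative, with $\lambda$ interpolating between $(r+1)\pi^*\alpha$ in the collar and a closed form outside. Your outline (shrink the collar, possibly rescale $\alpha$, take $\lambda$ closed far away so that $\om-d\lambda=\om$ there, and patch) is the correct strategy, and it is exactly what the reference carries out; since the present paper is a survey, it is appropriate that you delegate this construction rather than reproduce it.
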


\begin{Rmk}
Property (2) in Proposition \ref{prop:properties_of_m_hat} can be interpreted as $\nabla^{\widehat{\mathfrak{m}}}\A^{\widehat{F}}=\nabla^{\mathfrak{m}}\A^F$ although $\nabla^{\widehat{\mathfrak{m}}}\A^{\widehat{F}}$ is not well-defined since $\widehat{\mathfrak{m}}$ might be degenerate.
\end{Rmk}

\begin{Rmk}
Proposition \ref{prop:properties_of_m_hat} is very sensitive to Hamiltonian perturbations. It is not clear if compactness continues to hold in the perturbed case. However, Kang proved  in \cite{Kang_Existence_of_leafwise_intersection_points_in_the_unrestricted_case} an analogue of Proposition \ref{prop:properties_of_m_hat} in the case that the hypersurface is of contact type (but not necessarily restricted contact type) and that the Hamiltonian perturbation is localized in the symplectization.
\end{Rmk}

\begin{Cor}
If $F$, $J$, and $\lambda$ are chosen as in Proposition \ref{prop:properties_of_m_hat} then $\A_\alpha$ is a Lyapunov function for the gradient flow of $\A^F$. 
\end{Cor}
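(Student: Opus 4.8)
The plan is to differentiate $\A_\alpha$ along a gradient flow line and read off the sign from Proposition \ref{prop:properties_of_m_hat}; there is essentially nothing more to it, since all the analytic content has already been absorbed into that proposition. Since $\A_\alpha$ in \eqref{eqn:aux_fctl_is_non_increasing} is defined on $\L_0\times\R$, I would fix a negative gradient flow line $w=(v,\eta)\in C^\infty(\R,\L_0\times\R)$ of $\A^F$, i.e.~$\p_s w=-\nabla^{\mathfrak{m}}\A^F(w)$; here one uses that $\L_0$ is open and closed in $\L$, so a flow line meeting $\L_0\times\R$ remains there for all $s$. It then suffices to show that the smooth function $s\mapsto\A_\alpha\big(w(s)\big)$ has non-positive derivative.

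First I would write, using $\A_\alpha=\A^F-\widehat{\A}^F$ from \eqref{eqn:aux_fctl_is_non_increasing},
\[
\frac{d}{ds}\A_\alpha\big(w(s)\big)=d\A^F(w)\big[\p_s w\big]-d\widehat{\A}^F(w)\big[\p_s w\big]\;.
\]
The first term equals $\mathfrak{m}\big(\nabla^{\mathfrak{m}}\A^F(w),\p_s w\big)$ by the very definition of the $\mathfrak{m}$-gradient, and the second equals $\widehat{\mathfrak{m}}\big(\nabla^{\mathfrak{m}}\A^F(w),\p_s w\big)$ by Proposition \ref{prop:properties_of_m_hat}(2). Substituting $\p_s w=-\nabla^{\mathfrak{m}}\A^F(w)$ and using bilinearity yields
\[
\frac{d}{ds}\A_\alpha\big(w(s)\big)=-\big(\mathfrak{m}-\widehat{\mathfrak{m}}\big)\big(\nabla^{\mathfrak{m}}\A^F(w),\nabla^{\mathfrak{m}}\A^F(w)\big)\leq 0
\]
by Proposition \ref{prop:properties_of_m_hat}(1). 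Hence $\A_\alpha$ is non-increasing along the gradient flow of $\A^F$, which is exactly the assertion that it is a Lyapunov function.

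I do not anticipate a genuine obstacle, as the corollary is a one-line consequence of Proposition \ref{prop:properties_of_m_hat}; the only point deserving care is that $\widehat{\mathfrak{m}}$ is neither symmetric nor positive definite, so inequality (1) of that proposition must be interpreted as positivity of the associated quadratic form $\xi\mapsto(\mathfrak{m}-\widehat{\mathfrak{m}})(\xi,\xi)$ (only the symmetric part of $\widehat{\mathfrak{m}}$ contributes) — and this is precisely the expression produced above after the substitution $\p_s w=-\nabla^{\mathfrak{m}}\A^F(w)$. I would also remark that, since $\mathfrak{m}-\widehat{\mathfrak{m}}$ is only positive semidefinite, $\A_\alpha$ need not be strictly decreasing away from $\Crit\A^F$; nonetheless, combining monotonicity of $\A_\alpha$ with the monotonicity of $\A^F$ along a flow line $w$ asymptotic to $w_\pm\in\Crit\A^F$ recovers the two-sided estimate $\A^F(w_+)-\A_\alpha(w_-)\leq\widehat{\A}^F\big(w(s)\big)\leq\A^F(w_-)-\A_\alpha(w_+)$ stated before the corollary.
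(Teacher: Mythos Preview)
Your proof is correct and follows exactly the same computation as the paper: differentiate $\A_\alpha=\A^F-\widehat{\A}^F$ along a gradient flow line, rewrite the two differentials via the defining property of $\nabla^{\mathfrak m}$ and Proposition~\ref{prop:properties_of_m_hat}(2), substitute $\p_s w=-\nabla^{\mathfrak m}\A^F$, and conclude by Proposition~\ref{prop:properties_of_m_hat}(1). The only difference is the order in which you substitute versus rewrite, and your added remarks on semidefiniteness and the two-sided estimate are correct but not needed for the corollary itself.
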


\begin{proof}
Let $w$ be a gradient flow line of $\A^F$, i.e.~a solution of  $\p_s w(s)+\nabla^\mathfrak{m}\A^F\big(w(s)\big)=0$. Then we estimate using Proposition \ref{prop:properties_of_m_hat}
\bea
\frac{d}{ds}\A_\alpha(w(s))&=d\A^F(w(s))[\p_s w]-d\widehat{\A}^F(w(s))[\p_s w]\\
&=d\widehat{\A}^F(w(s))[\nabla^\mathfrak{m}\A^F]-d\A^F(w(s))[\nabla^\mathfrak{m}\A^F]\\
&=\widehat{\mathfrak{m}}(\nabla^{\mathfrak{m}}\A^F, \nabla^{\mathfrak{m}}\A^F)-{\mathfrak{m}}(\nabla^{\mathfrak{m}}\A^F, \nabla^{\mathfrak{m}}\A^F)\\
&\leq0\;.
\eea
\end{proof}

\subsubsection{Relation to symplectic vortex equations}

Symplectic vortex equations have been found independently by Mundet and Salamon and were known in the physics literature as gauged sigma models. They are usually studied for Hamiltonian group actions of compact groups. Symplectic vortex equation with domain the 2-dimensional cylinder can be derived from the gradient flow equation of the moment action functional. We explain in this section how the gradient flow equation for the Rabinowitz action functional can be thought of as the substitute for the (not well-defined) symplectic vortex equations for Hamiltonian $\R$-actions. 

Let $(M,\om)$ be a symplectically aspherical  manifold and $F:M\to\R$ a compactly supported autonomous Hamiltonian function. Then the Hamiltonian flow $\phi_F^t:M\to M$ can be thought of as a Hamiltonian $\R$-action with moment map $F$. The moment action functional corresponding to this $\R$-action as introduced by Cieliebak -- Gaio -- Salamon in \cite{Cieliebak_Gaio_Salamon_J_hol_curves_moment_maps_and_invariants_of_Hamiltonian_group_actions} is
\bea
\mathbb{A}^F:\L_0\times C^\infty(S^1,\R)&\pf\R\\
(v,\eta)&\mapsto\mathbb{A}(v,\eta):=-\int \bar{v}^*\om-\int\eta(t)F(v(t)) dt\;.
\eea
This differs from the Rabinowitz action functional in so far that we have an entire loop $\eta(t)$ of Lagrange multiplies, i.e.~there are infinitely many Lagrange multipliers. Critical points of $\mathbb{A}^F$ are solutions of
\beq
\left\{
\begin{aligned}
&\partial_tv=\eta(t) X_{F}(v),\;\forall t\in S^1\\
&F(v(t))=0,\;\forall t\in S^1\;.
\end{aligned}\right.
\eeq
The gauge group $\mathscr{H}:=C^\infty(S^1,\R)$ acts on $\L_0\times C^\infty(S^1,\R)$ by 
\beq
(v,\eta)\mapsto h_*(v,\eta):=(\phi_F^{h(t)}(v(t)),\eta+h')\;. 
\eeq
The differential of $\mathbb{A}^F$ is invariant under the action of the gauge group:
\beq
d\mathbb{A}^F(w)\big[\hat{w}\big]=d\mathbb{A}^F(h_*w)\Big[dh(w)\big[\hat{w}\big]\Big]
\eeq
for all $w\in\L_0\times C^\infty(S^1,\R)$, $\hat{w}\in T_w\big(\L_0\times C^\infty(S^1,\R)\big)$, and $h\in\mathscr{H}$. Since $\mathscr{H}$ is connected we conclude that $\mathbb{A}^F$ is invariant under $\mathscr{H}$, too. We define the based gauge group
\beq
\mathscr{H}_0:=\{h\in\mathscr{H}\mid h(0)=0\}\;.
\eeq
The based gauge group acts freely on $\L_0\times C^\infty(S^1,\R)$. Since for any $\eta\in C^\infty(S^1,\R)$ there exists a unique $h_\eta\in\mathscr{H}_0$ such that $\eta+h_\eta'=\int_0^1\eta(t)dt$ we can identify 
\bea\label{eqn:Coulomb_map}
\mathscr{C}:\L_0\times C^\infty(S^1,\R)\big/\mathscr{H}_0&\cong\L_0\times\R\\
[(v,\eta)]&\mapsto (h_\eta)_*(v,\eta)=\left(\phi_F^{h_\eta(t)}\big(v(t)\big),\int_0^1\eta(t)dt\right)\;.
\eea
Taking the mean-value of $\eta$ corresponds to the Coulomb gauge on the trivial $\R$-bundle over $S^1$. 

\begin{Rmk}
It is a remarkable fact that there exists a global slice for the gauge group action. This is related to the fact that $\R$ is abelian. For non-abelian gauge theories in general there exist no global slices. If the moment action functional $\mathbb{A}^F$ is restricted to this global Coulomb slice we obtain the Rabinowitz action functional $\A^F$. This gives another explanation why a single Lagrange multiplier in the Rabinowitz action functional $\A^F$ eventually leads to a point-wise constraint at critical points (as opposed to an integral constraint).
\end{Rmk}

In order to proceed to the symplectic vortex equations we need to assume that there exists an $\R$-invariant compatible almost complex structure $J$ on $M$, i.e.
\beq
(\phi_F^t)_*J=J\quad\forall t\in\R\;.
\eeq
On compact Lie groups invariant almost complex structure exist by averaging. In the current situation existence is not at all clear but occurs for instance if the flow of $F$ is periodic. Given such an invariant $J$ we define an $L^2$ inner product on $\L_0\times C^\infty(S^1,\R)$ by
\beq
{\mathfrak{g}}\big((\hat{v}_1,\hat{\eta}_1),(\hat{v}_2,\hat{\eta}_2)\big):=\int_0^1\om\big(\hat{v}_1,J(v)\hat{v}_2\big)dt+\int_0^1\hat{\eta}_1(t)\hat{\eta}_2(t)dt
\eeq
for $(\hat{v}_i,\hat{\eta}_i)\in T_{(v,\eta)}(\L_0\times C^\infty(S^1,\R))$. Then the gradient of $\mathbb{A}^F$ equals
\beq
\nabla^\mathfrak{g}\mathbb{A}^F(v,\eta)=\left(
\begin{aligned}
J(v)\big(&\p_t v-\eta X_F(v)\big)\\
&F(v)dt
\end{aligned}\right)
\eeq
and the gradient flow equation is
\beq\tag{$\star$}
\left\{
\begin{aligned}
&\p_sv+J(v)\big(\p _tv-\eta X_F(v)\big)=0\\
&\p_s\eta-F(v)=0
\end{aligned}\right.
\eeq
where $v:\R\times S^1\to M$ and $\eta:\R\times S^1\to\R$ are smooth maps. Since the functional $\mathbb{A}$ and the metric $\mathfrak{g}$ are invariant under $\mathscr{H}=C^\infty(S^1,\R)$ so is the gradient flow equation. These are the symplectic vortex equations on the cylinder in temporal gauge. The full symplectic vortex equations are
\beq\tag{$\star\star$}
\left\{
\begin{aligned}
&\p_sv-\zeta X_F(v)+J(v)\big(\p _tv-\eta X_F(v)\big)=0\\
&\p_s\eta-\p_t\zeta=F(v)
\end{aligned}\right.
\eeq
where $v:\R\times S^1\to M$ and $\eta,\zeta:\R\times S^1\to\R$ are smooth maps. This is invariant under the enlarged gauge group $\mathscr{G}:=C^\infty(\R\times S^1,\R)$ acting by
\beq
g_*(v,\eta,\zeta)=(\phi_F^g(v),\eta+\p_tg,\zeta+\p_sg)\;.
\eeq
Then we have
\beq
\{(v,\eta)\text{ solves }(\star)\}\big/\mathscr{H}\cong\{(v,\eta,\zeta)\text{ solves }(\star\star)\}\big/\mathscr{G}
\eeq
since in each $\mathscr{G}$-orbit $[v,\eta,\zeta]$ there is a representative with $\zeta=0$ obtained by regauging with $g\in\mathscr{G}$ satisfying $\p_sg=-\zeta$.
The second equation in $(\star\star)$ should be thought of as an equation for the curvature of the connection $\mathbf{A}:=\eta dt+\zeta ds$ on the cylinder. Indeed $(\star\star)$ can be written as
\beq\tag{$\star\star'$}
\left\{
\begin{aligned}
&\p_{J,\mathbf{A}}v=0\\
&\ast F_\mathbf{A}(v)=F(v)
\end{aligned}\right.
\eeq
where $F_\mathbf{A}$ is the curvature of $\mathbf{A}$ and $\ast$ is the Hodge star operator.

\begin{Rmk}
We point out that we have two inner products on $ \L_0\times\R$, see diagram \eqref{diagram}. One is the natural $L^2$ inner product $\mathfrak{m}$, see \eqref{eqn:L2_metric_RFH}. The other is $\mathscr{C}^*[\mathfrak{g}]$ where $\mathscr{C}$ is the Coulomb map, see \eqref{eqn:Coulomb_map}. We recall that $\mathfrak{g}$ is $\mathscr{H}_0$-invariant and we denote by $[\mathfrak{g}]$ the induced metric on the quotient.
\begin{equation}\label{diagram}
\begin{xy}
\xymatrix{
 &\big(\L_0\times C^\infty(S^1,\R),\mathfrak{g}\big)\ar[d]^{\pi}\\
 \big(\L_0\times\R,\mathfrak{m}\big) \ar@{^{(}->}[ru]^{\iota}&\ar[l]_>{\quad\qquad\mathscr{C}} \big(\L_0\times C^\infty(S^1,\R)\big/\mathscr{H}_0,[\mathfrak{g}]\big)
}
\end{xy}
\end{equation}
We point out that
\beq
\mathfrak{m}\neq\mathscr{C}^*[\mathfrak{g}]
\eeq
which is due to the fact that the infinitesimal gauge action is not $\mathfrak{g}$-orthogonal to the Coulomb slice. Thus, gradient  flow lines of $\nabla^\mathfrak{m}\A^F$ are different from gauge orbits of gradient flow lines of $\nabla^{\mathscr{C}^*[\mathfrak{g}]}\A^F$. The latter are the symplectic vortex equations. Therefore, the gradient flow equation for the Rabinowitz action functional is substantially different from the symplectic vortex equations. This also reflected in the respective compactness proofs for gradient flow lines.
\end{Rmk}

\subsubsection{Definition of Rabinowitz Floer homology}

For a generic Moser pair $\Mp=(F,H)$ the perturbed Rabinowitz action functional $\A^\Mp$ is Morse. Interestingly enough, if $\Sigma$ is a regular hypersurface, it is enough to perturb $H$ in order to make $\A^\Mp$ Morse, see \cite{Albers_Frauenfelder_Leafwise_intersections_and_RFH}. On the other hand, if $H=0$ then $\A^F$ is never Morse. This is due to the fact that $\A^F$ is invariant under the $S^1$-action on the loop space $\L$. Consequently, $S^1$ acts on the critical set $\Crit\A^F$. Indeed,  if $(v,\eta)\in\Crit\A^F$ is a Reeb orbit then $v(t+\tau,\eta)$, $\tau\in S^1$, is also a critical point. Moreover, $\Crit\A^F$ always contains the constant solutions $(x,0)$, $x\in\Sigma$. These are the fixed points of the $S^1$-action. For generic choice of $F$ the unperturbed Rabinowitz action functional $\A^F$ is Morse-Bott, see \cite{Cieliebak_Frauenfelder_Restrictions_to_displaceable_exact_contact_embeddings}.

In the following we only give a definition of Rabinowitz Floer homology in the case where $\A^\Mp$ is Morse. The Morse-Bott case can be defined by choosing an auxiliary Morse function on the critical manifolds and  counting gradient flow lines with cascades, see Frauenfelder \cite{Frauenfelder_Arnold_Givental_Conjecture,Cieliebak_Frauenfelder_Restrictions_to_displaceable_exact_contact_embeddings}. Moreover, we restrict our attention to $\Z/2$-coefficients. There's no doubt that Rabinowitz Floer homology can also be defined with $\Z$-coefficients but so far there is no treatment of coherent orientations for Rabinowitz Floer homology in the literature.
\subsubsection*{\underline{The case of restricted contact type}}
Let $\Sigma$ be a hypersurface of restricted contact type and $\Mp$ a Moser pair such that $\A^\Mp$ is Morse. The Rabinowitz Floer chain complex is defined as the Morse complex of $\A^\Mp$ with help of Novikov sums, see Hofer -- Salamon, \cite{Hofer_Salamon_Floer_homology_and_Novikov_rings}, for Floer theory with Novikov rings:
\beq
\RFHb(\Mp):=\Big\{\xi=\sum_{c}\xi_c\,c\mid\#\{c\in\Crit{\A^\Mp}\mid\xi_c\neq0\in\Z/2\text{ and }\A^\Mp(c)\geq\kappa\}<\infty\;\forall\kappa\in\R\Big\}\;.
\eeq
The boundary operator 
\beq
\p:\RFCb(\Mp)\pf\RFCb(\Mp)
\eeq
is defined by counting gradient flow lines by the standard procedure in Floer homology and satisfies $\p\circ\p=0$ thanks to compactness. Then Rabinowitz Floer homology is
\beq
\RFHb(\Mp):=\H(\RFCb(\Mp),\p))\;.
\eeq
The usual theory of continuation homomorphisms in Floer theory implies that Rabinowitz Floer homology is independent of auxiliary data such as the almost complex structure $J$ and the perturbation $H$. Moreover, if the hypersurface $\Sigma$ is isotoped through restricted contact type hypersurfaces Rabinowitz Floer homology does not change either.

\begin{Rmk}
Since Rabinowitz action functional is defined on the full loop space and the differential in the Rabinowitz Floer complex counts topological cylinders we can split Rabinowitz Floer homology into factors labeled by free homotopy classes
\beq
\RFHb(\Mp)=\bigoplus_{\gamma\in[S^1,M]}\RFH(\Mp,\gamma)\;.
\eeq
\end{Rmk}

\begin{Def}
We abbreviate by
\beq
\RFH(\Sigma,M):=\RFH(\Mp,\mathrm{pt})
\eeq 
where $\mathrm{pt}$ denotes the free homotopy class of a point and
\beq
\mathbf{RFH}(\Sigma,M):=\RFHb(\Mp)
\eeq
where $\Mp$ is a Moser pair such that $\A^\Mp$ is Morse. 
\end{Def}

\begin{Def}
Let $(W,\om=d\lambda)$ be an exact, compact symplectic manifold with contact type boundary $\Sigma:=\p W$. We denote by $\widehat{W}$ the completion of $W$ as described at the beginning of section \ref{sec:compactness} and set
\bea
\RFHb(\p W, W)&:=\RFHb(\Sigma,\widehat{W})\;,\\
\RFH(\p W, W)&:=\RFH(\Sigma,\widehat{W})\;.
\eea
\end{Def}

\begin{Thm}[\cite{Cieliebak_Frauenfelder_Oancea_Rabinowitz_Floer_homology_and_symplectic_homology}]\label{thm:RFH_does_not_depend_on_exterior}
Let $\Sigma\subset(M,\om=d\lambda)$ be a restricted contact type closed hypersurface which bounds the compact region $W$, then Rabinowitz Floer homology does not depend on the exterior $M\setminus W$:
\bea
\RFHb(\Sigma, W)&=\RFHb(\Sigma,M)\;,\\
\RFH(\Sigma,W)&=\RFH(\Sigma,M)\;.
\eea
\end{Thm}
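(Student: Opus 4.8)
The plan is to exploit the fact that Rabinowitz Floer homology is defined purely in terms of the Rabinowitz action functional $\A^\Mp$, whose critical points and gradient flow lines are confined near $\Sigma$. First I would use Theorem~\ref{thm:action_bounds_imply_compactness} together with the Fundamental Lemma (restricted contact type case) and Remark~\ref{rmk:Fundamental_Lemma_implies_uniform_bounds_on_eta}: these show that for any action window $[a,b]$, the gradient flow lines contributing to the chain complex in that window have their Lagrange multiplier $\eta$ uniformly bounded, and by the maximum principle (assertion (2) in the compactness discussion, using convexity at infinity) the loops $v$ stay in a fixed compact set $K\subset M$ independent of the exterior. The key geometric input is that $\Sigma$ of restricted contact type means a neighbourhood of $\Sigma$ in $M$ is symplectomorphic to a neighbourhood of $\Sigma$ in the completion $\widehat{W}$, and by the preceding Remark one can arrange (by choosing $F$ and $J$ adapted to the symplectization structure near $\Sigma$, with $dF$ compactly supported near $\Sigma$) that all relevant flow lines stay inside the positive part of the symplectization of $\Sigma$, hence inside $\widehat{W}$.

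The central step is therefore a confinement argument: one shows that gradient flow lines of $\A^\Mp$ in $M$ with bounded action never leave the region $W$ together with a fixed collar $\Sigma\times[0,r_0)$ of the symplectization. This is exactly the content alluded to in the final Remark of section~\ref{sec:compactness} (the Cieliebak--Frauenfelder--Oancea analysis): one applies a maximum principle to the function $r\circ v$ where $r$ is the symplectization coordinate, using that $F$ is locally constant outside the collar (so $X_F$ vanishes there and the flow equation reduces to the $J$-holomorphic curve equation in a convex region) and that $H$ can be taken supported in the collar as well. Once confinement is established, the identification is formal: choosing the \emph{same} adapted $F$, $J$, $H$ for both $(\Sigma,M)$ and $(\Sigma,\widehat{W})$ — which is possible precisely because they agree on a neighbourhood of $\Sigma$ — yields identical critical sets $\Crit\A^\Mp$, identical action values, and identical moduli spaces of gradient flow lines. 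Hence the Rabinowitz Floer chain complexes, boundary operators, and homologies literally coincide, giving $\RFHb(\Sigma,W)=\RFHb(\Sigma,M)$ and likewise for the point-class summand $\RFH$.

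I would then note two technical points that need care. First, one must check that the chain complex, being an inverse-limit-type completion over the action filtration (Novikov sums), is genuinely recovered from the action-windowed pieces; this follows since each truncation $\{\A^\Mp\geq\kappa\}$ involves only finitely many critical points with uniformly confined flow lines, and the completions are then canonically isomorphic. Second, invariance under the choice of auxiliary data already established for $\RFHb$ means we are free to pick the adapted $(F,J,H)$ without loss of generality, so the argument loses nothing by specializing. One should also invoke the Remark stating that the compact-support assumption on $dF$ can be relaxed to an asymptotic growth condition, to be sure the class of admissible defining functions is large enough to contain an adapted one in both $M$ and $\widehat{W}$ simultaneously.

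The main obstacle is the confinement estimate itself: proving that no gradient flow line escapes into the exterior $M\setminus W$. Unlike the $C^0$-bound from convexity at infinity (which only confines flow lines to \emph{some} large compact set depending on $M$), here one needs an a priori bound that is independent of the exterior, i.e.\ a barrier at a fixed radius $r_0$ in the symplectization that does not depend on how $M$ extends beyond $W$. This requires a delicate maximum-principle argument exploiting the precise interplay between the symplectization structure, the cutoff in $F$, and the sign of the curvature term, and it is the heart of the Cieliebak--Frauenfelder--Oancea result \cite{Cieliebak_Frauenfelder_Oancea_Rabinowitz_Floer_homology_and_symplectic_homology}; everything else is bookkeeping.
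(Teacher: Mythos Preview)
The paper is a survey and does not give a proof of this theorem; it states the result with a citation to \cite{Cieliebak_Frauenfelder_Oancea_Rabinowitz_Floer_homology_and_symplectic_homology} and, in the final Remark of the restricted-contact-type compactness discussion, sketches exactly the mechanism you describe: gradient flow lines do not leave the positive part of the symplectization of $\Sigma$, so $M$ can be replaced by the completion $\widehat{W}$. Your proposal correctly identifies this confinement estimate as the only nontrivial step, correctly attributes it to the maximum-principle analysis of \cite{Cieliebak_Frauenfelder_Oancea_Rabinowitz_Floer_homology_and_symplectic_homology}, and correctly observes that once confinement holds the chain complexes are literally identical for matching auxiliary data; this is precisely the argument the paper alludes to.
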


There are two other versions of Rabinowitz Floer homology. For this we fix numbers $a<b$ and define the $\Z/2$ vector space
\beq
\RFCb^{(a,b)}(\Mp):=\Big\{\sum_{c}\xi_c c\mid c\in\Crit\Mp, a<\A^\Mp(c)<b, \xi_c\in\Z/2\Big\}\;.
\eeq
Counting gradient flow lines again defines a differential on $\RFCb^{(a,b)}(\Mp)$. We denote the homology by
\beq
\RFHb^{(a,b)}(\Mp)\;.
\eeq 
The natural inclusion and projection homomorphisms between $\RFCb^{(a,b)}(\Mp)$ and $\RFCb^{(a',b')}(\Mp)$ turn $\Big\{\RFHb^{(a,b)}(\Mp)\Big\}$ into a bidirected system of vector spaces. Thus, we can take direct and inverse limits:
\bea
\overline{\RFHb}(\Mp)&:=\lim_{\substack{\longrightarrow\\b\to\infty}}\lim_{\substack{\longleftarrow\\a\to-\infty}}\RFHb^{(a,b)}(\Mp)\;,\\
\underline{\RFHb}(\Mp)&:=\lim_{\substack{\longleftarrow\\a\to-\infty}}\lim_{\substack{\longrightarrow\\b\to\infty}}\RFHb^{(a,b)}(\Mp)\;.
\eea
The three vector spaces $\RFHb(\Mp)$, $\overline{\RFHb}(\Mp)$, and $\underline{\RFHb}(\Mp)$ are related by a commutative diagram as follows
\begin{equation}\label{dia}
\begin{xy}
 \xymatrix{\RFHb(\Mp) \ar[r]^{\overline{\rho}}
\ar[rd]_/-0.8em/{\underline{\rho}}& \overline{\RFHb}(\Mp)\ar[d]^{\kappa}\\
 & \underline{\RFHb}(\Mp)
}
\end{xy}
\end{equation}
in which $\overline{\rho}$ is an isomorphism and $\kappa$ is surjective. We point out that the canonical map $\overline{\rho}$ is not necessarily an isomorphism if $\Z$-coefficients instead of field coefficients are used. For details and proofs see \cite{Cieliebak_Frauenfelder_Morse_homology_on_noncompact_manifolds}.

\begin{Rmk}\label{rmk:virtual_restricted_contact_type}
A hypersurface $\Sigma\subset(M,\om)$ is of virtual restricted contact type if there exists a cover $\pi:\widetilde{M}\pf M$ and a primitive $\widetilde{\lambda}$ of $\pi^*\om$ which is bounded on $\pi^{-1}(\Sigma)$ with respect to $\pi^*g$ where $g$ is a Riemannian metric on $\Sigma$. The notion of virtual restricted contact type plays an important role in the study of twisted cotangent bundles since energy hypersurfaces above Ma\~n\'e's critical value are of virtual restricted contact type but in general not of restricted contact type, see \cite{Cieliebak_Frauenfelder_Paternain_Symplectic_Topology} for details. Rabinowitz Floer homology for virtual restricted contact type hypersurfaces can be defined as in the case of restricted contact type on the component of the loop space containing the contractible loops.
\end{Rmk}

\subsubsection*{\underline{The case of stable hypersurfaces}}
In this section the Rabinowitz action functional is only considered on the space of contractible loops. If restricted contact type is replaced by stability of the hypersurface two difficulties appear. The first is of technical nature namely stability is not an open condition, see \cite{Cieliebak_Frauenfelder_Paternain_Stability_is_not_open}. Therefore, in general it cannot be expected that the unperturbed Rabinowitz action functional can be made Morse-Bott on a given stable hypersurface. It remains true that for a small Hamiltonian perturbation the perturbed Rabinowitz action function is Morse. However, Proposition \ref{prop:properties_of_m_hat} does not allow for Hamiltonian perturbations. Therefore, compactness for families of gradient flow lines might fail as follows: new gradient flow lines from infinity, i.e.~with very large Lagrange multipliers, might appear. Nevertheless, for sufficiently small perturbation there is no interaction between gradient flow lines with small and large Lagrange multipliers. To implement this the filtration on the Rabinowitz Floer complex has to be  modified to involve both action functionals $\A^F$ and $\A_\alpha$ defined above. Since both action functional are Lyapunov functions along gradient flow lines (see above) we obtain a doubly filtered complex $\RFC^{(a,b),(a',b')}$. Then using a Hamiltonian perturbation of size depending on $a,b,a',b'$ a boundary operator $\p$ can still be defined by ignoring gradient flow lines with very large Lagrange multipliers. Moreover, the resulting filtered homology is independent of the small Hamiltonian perturbation. Thus, $\overline{\RFH}$ can be defined as above by taking an inverse-direct limit over $a,b,a',b'$. We point out that there is no analogue of the definition with Novikov sums for stable hypersurfaces.

The second, more serious difficulty is that the just defined Rabinowitz Floer homology might depend on the stabilizing 1-form $\alpha\in\Om^1(\Sigma)$ in a very subtle way. Indeed, even though the critical points and the gradient flows do not depend on $\alpha$ Rabinowitz Floer homology $\overline{\RFH}$ might depend on $\alpha$ through the double action filtration by $\A^F$ and $\A_\alpha$ in the inverse-direct limit. To guarantee independence of Rabinowitz Floer homology we impose the additional assumption of tameness. 

\begin{Def}
A stable hypersurface $\Sigma\in(M,\om)$ is tame if there exists a stabilizing 1-form $\alpha\in\Om^1(\Sigma)$ and a constant $C>0$ such that
\beq
\left|\int v^*\alpha\right|\leq C\left|\int\bar{v}^*\om\right|
\eeq
for all Reeb orbits $v$ of $\alpha$ which are contractible in $M$. Here $\bar{v}$ again denotes a filling disk and the right hand side is independent of $\bar{v}$ since $(M,\om)$ is symplectically aspherical.
\end{Def}
There are examples of stable hypersurfaces which are not tame. In fact, even contact type hypersurface need not be tame. For more details we refer to \cite{Cieliebak_Frauenfelder_Paternain_Symplectic_Topology}. More examples of non-tame hypersurfaces are provided in Macarini -- Paternain \cite{Macarini_Paternain_On_the_stability_of_Mane_critical_hypersurfaces} and Cieliebak -- Volkov \cite{Cieliebak_Volkov_First_steps_in_stable_Hamiltonian_topology}. An isotopy of hypersurfaces is called a stable isotopy if the stabilizing 1-form depends smoothly on the isotopy parameter. It is called a tame stable isotopy if in addition all hypersurfaces are tame with a taming constant independent of the isotopy parameter.

\begin{Thm}[\cite{Cieliebak_Frauenfelder_Paternain_Symplectic_Topology}]
If $\Sigma$ is tame then Rabinowitz Floer homology $\overline{\RFH}$ is invariant under tame stable isotopies. Since the space of stabilizing 1-forms is a cone, hence connected, $\overline{\RFH}$ is independent of $\alpha$.
\end{Thm}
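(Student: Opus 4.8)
The plan is to establish invariance under tame stable isotopies by the standard continuation-map argument, carried out on the doubly filtered complex, and then to deduce independence of $\alpha$ from the convexity of the space of stabilizing $1$-forms.

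First I would dispose of the second assertion, granting the first. The space of stabilizing $1$-forms on a fixed hypersurface $\Sigma$ is convex: the condition $\ker\om\subset\ker d\alpha$ is linear in $\alpha$, and $\alpha\wedge\om^{m-1}|_\Sigma>0$ is preserved by convex combinations; in particular it is connected, which is the content of ``cone, hence connected''. Given two tame stabilizing forms $\alpha_0,\alpha_1$ with taming constants $C_0,C_1$, the linear path $\alpha_t:=(1-t)\alpha_0+t\alpha_1$ therefore consists of stabilizing forms, and each is tame with constant $\max(C_0,C_1)$: the closed characteristics of the line field $\ker\om|_\Sigma$ do not depend on the stabilizing form --- only their parametrizations as Reeb orbits and their periods do --- so for a closed characteristic $\gamma$ one has $\int_\gamma\alpha_t=(1-t)\int_\gamma\alpha_0+t\int_\gamma\alpha_1$ while $\int\bar\gamma^*\om$ is unchanged, and the taming inequality for $\alpha_t$ follows from those for $\alpha_0$ and $\alpha_1$ by the triangle inequality. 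Hence $\{(\Sigma,\alpha_t)\}_{t\in[0,1]}$, with $\Sigma$ held fixed, is a tame stable isotopy, and the first assertion gives that $\overline{\RFHb}$ is independent of $\alpha$.

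For the invariance statement, fix a tame stable isotopy $\{(\Sigma_r,\alpha_r)\}_{r\in[0,1]}$ with uniform taming constant $C$, and choose smoothly $r$-dependent auxiliary data: defining functions $F_r$ with $dF_r$ of compact support, primitives $\lambda_r$ of $\om$ with $\lambda_r|_{\Sigma_r}=\alpha_r$, and $\om$-compatible almost complex structures $J_r$, arranged so that for each fixed $r$ they satisfy the hypotheses of Proposition~\ref{prop:properties_of_m_hat} (so the stable-case Fundamental Lemma holds and $\A_{\alpha_r}$ is a Lyapunov function for the gradient flow of $\A^{F_r}$), and so that after a small generic Hamiltonian perturbation $\A^{\Mp_r}$ is Morse. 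I would realize the continuation between $r=0$ and $r=1$ by an $s$-dependent interpolation of this data and study the resulting continuation trajectories on the doubly filtered complexes $\RFCb^{(a,b),(a',b')}(\Mp_r)$, with filtration $(a,b)$ by $\A^F$ and $(a',b')$ by $\A_\alpha$.

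The heart of the matter, and the main obstacle, is compactness of these continuation trajectories: Proposition~\ref{prop:properties_of_m_hat}, hence the clean monotonicity picture, does not survive the $s$-dependence of the data, so one cannot directly bound the Lagrange multiplier. The remedy --- which is exactly where \emph{tameness} enters --- is that the two filtrations are comparable on $\Crit\A^{F_r}$ with a constant governed by $C$, uniformly in $r$; consequently, if the interpolation is carried out slowly enough (the $\partial_s$-derivatives of the data small, the required smallness depending on $a,b,a',b'$ but not on $r$) then a continuation trajectory with asymptotics in the window $(a,b)\times(a',b')$ stays, in both filtrations, in a slightly enlarged window, runs slowly for most of its flow time, and there the stable-case Fundamental Lemma bounds $\eta$ in terms of the now-controlled action, while over the bounded interpolation region $\eta$ changes by a controlled amount. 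One thus obtains an a priori bound on $\eta$ along such trajectories, so that they never interact with the spurious ``gradient flow lines from infinity'' carrying very large $\eta$. With this in hand, the continuation maps are chain maps on the doubly filtered complexes, compatible with the inclusion and projection maps of the bidirected system; the usual formalism --- composition of continuations chain-homotopic to the continuation of the concatenated homotopy, constant homotopy chain-homotopic to the identity --- upgrades them to isomorphisms of $\RFHb^{(a,b),(a',b')}$, and passing to the inverse--direct limit over all windows yields $\overline{\RFHb}(\Mp_0)\cong\overline{\RFHb}(\Mp_1)$. The remaining ingredients are standard: Morse and transversality perturbations, gluing, and the $C^0$-bounds on the loop component coming from convexity (or geometric boundedness) of $M$ and the exclusion of holomorphic bubbles by symplectic asphericity, as in the proof of Theorem~\ref{thm:action_bounds_imply_compactness}.
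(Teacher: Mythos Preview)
The paper is a survey and does not prove this theorem: it is stated with a citation to \cite{Cieliebak_Frauenfelder_Paternain_Symplectic_Topology} and no argument is given. So there is no ``paper's own proof'' to compare against.

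That said, your sketch is the right shape and matches what the cited reference does. The deduction of $\alpha$-independence from isotopy invariance is clean and correct: the stabilizing conditions are convex, closed characteristics are independent of the stabilizing form, and your triangle-inequality estimate yields a uniform taming constant along the linear path. For the invariance itself, your identification of the crux --- compactness fails because Proposition~\ref{prop:properties_of_m_hat} does not survive $s$-dependence, and tameness rescues it by making the two filtrations uniformly comparable on critical points --- is exactly the point. The details you gesture at (adiabatic/slow interpolation to control the action drift, then the stable Fundamental Lemma to bound $\eta$ on the long near-critical stretches) are precisely the mechanism in \cite{Cieliebak_Frauenfelder_Paternain_Symplectic_Topology}.

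Two small corrections. First, in the stable case the construction lives on contractible loops only, so the relevant object is $\overline{\RFH}$, not $\overline{\RFHb}$; adjust your notation accordingly. Second, the definition of a tame stable isotopy in the paper requires the \emph{hypersurfaces} to be tame with a uniform constant, not necessarily that the chosen family $\alpha_r$ realizes the tameness; in practice one needs the latter to run the argument, and your reduction for the constant isotopy $\Sigma_t\equiv\Sigma$ does supply it, but for a general isotopy you should state this as an assumption on the family $\alpha_r$ (or argue that one can always pass to such a family).
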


\begin{Def}
We abbreviate
\beq
\RFH(\Sigma,M):=\overline{\RFH}\;.
\eeq 
This is justified since in the case of restricted contact type the natural map $\overline{\rho}$ in the commuting diagram \ref{dia} is an isomorphism.
\end{Def}

\subsubsection*{\underline{Grading}} 

If the homomorphism $I_{c_1(M)}:\pi_2(M)\to\Z$ obtained by integrating the first Chern class $c_1(M)$ over a smooth representative vanishes identically then Rabinowitz Floer homology $\RFH_*$ carries a $\Z$-grading in terms of the Conley-Zehnder index, see \cite{Cieliebak_Frauenfelder_Restrictions_to_displaceable_exact_contact_embeddings}. In general only  a relative $\Z/N$ can be defined where $N$ is the minimal Chern number. There are different normalization conventions. In \cite{Cieliebak_Frauenfelder_Restrictions_to_displaceable_exact_contact_embeddings} the grading takes values in the set $\frac12+\Z$. Underlying this convention is the philosophy that the index in a semi-infinite homology is $\tfrac12$ times some kind of signature index of the Hessian at critical points. We refer to the article  \cite{Robbin_Salamon_Feynman_path_integrals} by Salamon -- Robbin for research in this direction.  The  half signature index takes values in $\Z$ on even dimensional manifolds and in $\frac12+\Z$ on odd dimensional manifolds. Since the loop space $\L$ of a symplectic manifolds is itself symplectic $\L$ is an ``even'' infinite-dimensional manifold and consequently $\L\times\R$ is an ``odd'' infinite-dimensional manifold. Therefore the convention used in \cite{Cieliebak_Frauenfelder_Restrictions_to_displaceable_exact_contact_embeddings} is consistent with this interpretation. Moreover, with this convention 
\beq
\RFH^{-*}\cong\RFH_*
\eeq
holds. This isomorphism is induced by the involution
\bea
I:\L\times\R&\pf\L\times\R\\
(v,\eta)&\mapsto (v^-,-\eta)
\eea
where $v^-(t):=v(1-t)$ under which the unperturbed Rabinowitz action functional is anti-invariant
\beq
\A^F\circ I=-\A^F\;.
\eeq
Another useful convention is to replace the above convention by adding $\frac12$. This conventions fits better with the computational results comparing Rabinowitz Floer homology with symplectic homology and loop space topology as carried out in \cite{Cieliebak_Frauenfelder_Oancea_Rabinowitz_Floer_homology_and_symplectic_homology,
Abbondandolo_Schwarz_Estimates_and_computations_in_Rabinowitz_Floer_homology,
Merry_On_the_RFH_of_twisted_cotangent_bundles}.

\subsection{Computations}

\subsubsection{The displaceable case}

Let $(M,\om)$ be symplectically aspherical, and convex at infinity or geometrically bounded and $\Sigma\subset(M,\om)$ of (virtual) restricted contact type or stable tame. 

\begin{Thm}$ $
\begin{enumerate}
\item If the hypersurface $\Sigma$ is displaceable then Rabinowitz Floer homology $\RFH_*(\Sigma,M)$ vanishes.
\item If in addition $\Sigma$ is of restricted contact type then even the full Rabinowitz Floer homology $\RFHb_*(\Sigma,M)$ vanishes.
\end{enumerate}
\end{Thm}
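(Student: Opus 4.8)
The plan is to combine invariance of Rabinowitz Floer homology under Hamiltonian perturbations with the observation that a displacing Hamiltonian leaves the perturbed Rabinowitz action functional without critical points. Assume $\Sigma$ is displaceable, so there is $\widehat{H}\colon M\times S^1\to\R$ with $\phi_{\widehat{H}}^1(\Sigma)\cap\Sigma=\emptyset$. By the reparametrisation remark following Definition~\ref{def:Moser_pair} we may choose $H\colon M\times S^1\to\R$ supported in $t\in[\tfrac12,1]$ whose time-one flow still displaces $\Sigma$, together with $F(x,t)=\rho(t)f(x)$ built from a defining function $f$ of $\Sigma$, so that $\Mp=(F,H)$ is a Moser pair. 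Then $\Crit\A^\Mp=\emptyset$: if $(v,\eta)\in\Crit\A^\Mp$, then by Proposition~\ref{prop:critical_points_give_LI} the point $x=v(\tfrac12)$ lies on $\Sigma=f^{-1}(0)$ and satisfies $\phi_H^1(x)\in L_x$; but $L_x\subset\Sigma$ by conservation of energy, so $\phi_H^1(x)\in\Sigma\cap\phi_H^1(\Sigma)=\emptyset$, which is impossible. Hence the Rabinowitz Floer complex of $\Mp$ is the zero complex.

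For part~(2), where $\Sigma$ is of restricted contact type, this already concludes the argument. Since the Fundamental Lemma and Theorem~\ref{thm:action_bounds_imply_compactness} apply to arbitrary Moser pairs, the continuation homomorphisms for the linear homotopy $(F,sH)$, $s\in[0,1]$, are defined and mutually inverse, so
\[
\RFHb(\Sigma,M)\;\cong\;\RFHb(F,0)\;\cong\;\RFHb(F,H)\;=\;0,
\]
and splitting into free homotopy classes shows that every summand vanishes. This is the argument of Cieliebak--Frauenfelder~\cite{Cieliebak_Frauenfelder_Restrictions_to_displaceable_exact_contact_embeddings}.

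For part~(1) in the remaining cases -- stable tame, or virtual restricted contact type, where $\RFH(\Sigma,M)=\overline{\RFH}$ is the inverse--direct limit of the action-filtered groups on the contractible component -- one cannot turn on the full displacing Hamiltonian, because Proposition~\ref{prop:properties_of_m_hat}, hence the Lyapunov function $\A_\alpha$, is destroyed by a generic $H$ and new gradient flow lines ``from infinity'' with very large Lagrange multipliers may appear. Instead I would argue window by window. Fix $a<b$; on the truncated parametrised moduli spaces the Lagrange multiplier stays a priori bounded (very large $|\eta|$ forces very large $|\A|$, which lies outside the window; cf. the Fundamental Lemma and Remark~\ref{rmk:Fundamental_Lemma_implies_uniform_bounds_on_eta}), so after a Morse perturbation of size adapted to the window as in Cieliebak--Frauenfelder--Paternain~\cite{Cieliebak_Frauenfelder_Paternain_Symplectic_Topology} the continuation map $\RFHb^{(a,b)}(F,0)\to\RFHb^{(a-c,b+c)}(F,H)$ is well defined, where $c$ depends only on the energy of the homotopy, hence only on $H$. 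Its target is $0$ because $\Crit\A^{(F,H)}=\emptyset$ in every window, and composing with the return continuation map $\RFHb^{(a-c,b+c)}(F,H)\to\RFHb^{(a-2c,b+2c)}(F,0)$ -- which by the usual homotopy-of-homotopies argument equals the structure map of the bidirected system $\{\RFHb^{(a,b)}\}$ -- shows this structure map is zero. Since the enlargement $c$ is a fixed constant, every structure map appearing in $\overline{\RFHb}=\lim_{b\to\infty}\lim_{a\to-\infty}\RFHb^{(a,b)}$ factors through a zero map, so $\RFH_*(\Sigma,M)=\overline{\RFH}=0$; the virtual restricted contact type case is identical after pulling back to the cover and restricting to contractible loops.

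The main obstacle is precisely the bookkeeping of the last paragraph in the stable case: one must make rigorous that the truncated continuation maps and the truncated boundary operator (with $\p\circ\p=0$) are defined for a Morse perturbation whose size is controlled by the chosen action window, so that the count legitimately ignores the ``flow lines from infinity'', and one must verify that the composition of the two continuation maps is genuinely the structure map of the bidirected system rather than merely a filtration-shifting surjection; both points are handled in \cite{Cieliebak_Frauenfelder_Paternain_Symplectic_Topology}. In the restricted contact type case none of this is needed, and part~(2) is the two-line continuation argument above, which a fortiori gives part~(1) there.
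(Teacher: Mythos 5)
Your argument for part (2), and for part (1) in the (virtual) restricted contact type case, is correct and is essentially the paper's own route (the paper only sketches it in the ``first properties'' paragraph and otherwise cites \cite{Cieliebak_Frauenfelder_Restrictions_to_displaceable_exact_contact_embeddings}): a displacing Hamiltonian, reparametrized into a Moser pair, forces $\Crit\A^\Mp=\emptyset$ via Proposition \ref{prop:critical_points_give_LI} and conservation of $f$, and since the Fundamental Lemma and Theorem \ref{thm:action_bounds_imply_compactness} hold for arbitrary Moser pairs in the restricted contact type setting, the continuation isomorphism to the unperturbed functional gives $\RFHb_*(\Sigma,M)\cong 0$.

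The stable tame case is where your proposal has a genuine gap. Your window-by-window argument rests on the parenthetical claim that along the parametrized continuation (and homotopy-of-homotopies) moduli spaces for the homotopy from $(F,0)$ to the full displacing pair $(F,H)$, ``very large $|\eta|$ forces very large $|\A|$''. In the stable case this is exactly what is not available once $H\neq 0$: the Fundamental Lemma there bounds $|\eta|$ by the auxiliary action $|\widehat{\A}^F|$, not by $|\A^F|$, it is stated only for the unperturbed functional, and the mechanism that propagates it along flow lines --- the Lyapunov property of $\A_\alpha$ coming from Proposition \ref{prop:properties_of_m_hat} --- is destroyed by Hamiltonian perturbations; the remark following Proposition \ref{prop:properties_of_m_hat} states explicitly that compactness in the perturbed case is unclear. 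Accordingly, the window-dependent perturbations in the stable-case construction are small Morse perturbations, not the displacing Hamiltonian, so you cannot simply ``turn on $H$'' inside a window and invoke Remark \ref{rmk:Fundamental_Lemma_implies_uniform_bounds_on_eta}. A telling symptom is that tameness --- a hypothesis of the theorem, and the very thing that relates the two filtrations by $\A^F$ and $\A_\alpha$ in the inverse-direct limit --- is never used in your argument; any correct proof in the stable case must use it (together with the additional estimates of \cite{Cieliebak_Frauenfelder_Paternain_Symplectic_Topology}) to obtain the $\eta$-bounds for the continuation maps. So your sketch defers the stable case to \cite{Cieliebak_Frauenfelder_Paternain_Symplectic_Topology} precisely at the step that carries the analytic content; that is also all the survey itself does, but the step should not be presented as a consequence of the Fundamental Lemma as stated.
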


This is proved for restricted contact type in \cite{Cieliebak_Frauenfelder_Restrictions_to_displaceable_exact_contact_embeddings} and for the remaining cases in \cite{Cieliebak_Frauenfelder_Paternain_Symplectic_Topology}.

\begin{Thm}
If the hypersurface $\Sigma$ carries no Reeb orbits which are contractible in $M$ then
\beq
\RFH_*(\Sigma,M)\cong\H_*(\Sigma)\;.
\eeq 
\end{Thm}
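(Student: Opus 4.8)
The plan is to show that when $\Sigma$ carries no contractible (in $M$) Reeb orbits, the unperturbed Rabinowitz action functional $\A^F$ restricted to the contractible loop component is Morse--Bott with critical manifold $\Crit\A^F \cong \Sigma$ sitting at action level $0$, and then compute its Morse--Bott homology. Since the only critical points $(v,\eta)$ with $v$ contractible correspond either to constant loops in $\Sigma$ (with $\eta=0$) or to contractible Reeb orbits on $\Sigma$ (with $\eta\neq 0$), the hypothesis kills the latter. Thus $\Crit\A^F$ is precisely the zero section $\{(x,0)\mid x\in\Sigma\}$, a compact manifold diffeomorphic to $\Sigma$, and $\A^F$ vanishes identically on it because $\Sigma$ is a regular level set of $F$ (so the $\lambda$-term and the $F$-term both vanish on constants). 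The first thing I would do is verify the Morse--Bott condition: compute the Hessian of $\A^F$ at $(x,0)$ and check that its kernel is exactly $T_x\Sigma \oplus 0$, i.e.\ that the normal Hessian is nondegenerate; this is where one uses regularity of $\Sigma$ and is done in \cite{Cieliebak_Frauenfelder_Restrictions_to_displaceable_exact_contact_embeddings}, so I would cite it.

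Next, I would invoke the compactness machinery: Theorem~\ref{thm:action_bounds_imply_compactness} together with the Fundamental Lemma (restricted contact type case) and Remark~\ref{rmk:Fundamental_Lemma_implies_uniform_bounds_on_eta} guarantee that moduli spaces of gradient flow lines between critical points are compact up to breaking, so the Morse--Bott complex is well-defined. The key dynamical input is that there is no breaking at intermediate critical values: all critical points lie at the single action value $0$, so any broken trajectory has all its pieces connecting the critical manifold $\Sigma$ to itself, and in fact there can be no non-constant gradient flow lines at all, since a non-constant flow line would strictly decrease the action, contradicting the fact that source and target both have action $0$. Hence the Rabinowitz Floer differential (in the Morse--Bott formulation with cascades, following Frauenfelder \cite{Frauenfelder_Arnold_Givental_Conjecture}) reduces to counting gradient flow lines of the auxiliary Morse function on $\Sigma$ with \emph{zero} cascades — that is, it is exactly the Morse differential of the chosen Morse function on $\Sigma$.

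Then the computation is immediate: the cascade complex with no cascades is the Morse complex of $\Sigma$, whose homology is $\H_*(\Sigma)$, and one must check the grading shift is trivial (the Conley--Zehnder index of a constant loop at a critical point equals, up to the normalization conventions discussed in the Grading paragraph, the Morse index on $\Sigma$, so no global shift is introduced, or only the conventional one). Finally, by the invariance statements already recorded — $\RFH$ is independent of the defining function $F$, the almost complex structure $J$, and the auxiliary Morse--Bott data, and by Theorem~\ref{thm:RFH_does_not_depend_on_exterior} independent of the exterior — this identification $\RFH_*(\Sigma,M)\cong\H_*(\Sigma)$ is well-posed.

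The main obstacle is establishing rigorously that \emph{no} gradient flow lines with positive cascade number contribute: one needs the monotonicity of $\A^F$ along flow lines plus the fact that the critical manifold sits at a single action level to rule out nonconstant cascades, and one must also be careful that the Morse--Bott perturbation (a small Morse function on $\Sigma$ plus possibly a tiny Hamiltonian term to achieve transversality) does not spawn new short flow lines between perturbed critical points at nearly-equal action — this is handled by taking the perturbation small enough and is the technical heart of the cascade formalism. A secondary subtlety is the grading normalization, which only affects the statement cosmetically and is addressed by the conventions in \cite{Cieliebak_Frauenfelder_Restrictions_to_displaceable_exact_contact_embeddings}.
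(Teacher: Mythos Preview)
Your proposal is correct and follows exactly the paper's approach: the paper's proof is the single sentence ``This follows immediately from the definition of $\RFH_*$ in the Morse--Bott case, see above,'' and your write-up is precisely the unpacking of that sentence --- no contractible Reeb orbits forces $\Crit\A^F\cong\Sigma$ at action $0$, so the cascade complex degenerates to the Morse complex of $\Sigma$. The additional care you take with cascades, compactness, and grading is appropriate elaboration but not a different route.
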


This follows immediately from the definition of $\RFH_*$ in the Morse-Bott case, see above. The previous two Theorems imply the following Corollary.

\begin{Cor} \label{cor:existence_of_Reeb_orbit_on_displaceable_stable}
If the hypersurface $\Sigma$ is displaceable then there exists a Reeb orbit which is contractible in $M$.
\end{Cor}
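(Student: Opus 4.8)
The plan is to argue by contradiction combined with the two preceding theorems. Suppose $\Sigma$ is displaceable but carries no Reeb orbit which is contractible in $M$. By the first theorem (the displaceable case), displaceability of $\Sigma$ forces $\RFH_*(\Sigma,M)\cong 0$. On the other hand, the second theorem computes Rabinowitz Floer homology under the hypothesis that $\Sigma$ carries no contractible-in-$M$ Reeb orbits, giving $\RFH_*(\Sigma,M)\cong\H_*(\Sigma)$. Since $\Sigma$ is a nonempty closed manifold, $\H_*(\Sigma)\neq 0$ (for instance $\H_0(\Sigma)\neq 0$, as $\Sigma$ has at least one connected component, and we are working with $\Z/2$-coefficients so $\H_0$ is nonzero on each component). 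This contradicts the vanishing, so $\Sigma$ must carry a Reeb orbit contractible in $M$.

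The only subtlety is that the second theorem applies to $\RFH_*(\Sigma,M)=\RFH(\Mp,\mathrm{pt})$, the summand in the free homotopy class of a point, which is precisely what is computed to be $\H_*(\Sigma)$: indeed in the absence of any contractible Reeb orbits the only critical points of $\A^F$ lying in the component $\L_0$ of contractible loops are the constants $(x,0)$ with $x\in\Sigma$, and the functional is Morse--Bott with critical manifold $\Sigma$ sitting at action level zero, so the cascade construction recovers $\H_*(\Sigma)$. The displaceability theorem also outputs vanishing of exactly this group $\RFH_*(\Sigma,M)$ (and, when $\Sigma$ is of restricted contact type, of the full $\RFHb_*$). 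Thus the two statements really do refer to the same vector space and the contradiction is genuine.

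I would then remark that no further input is needed: the corollary is a pure formal consequence of the two theorems, which is exactly why it is stated as a corollary. The hypotheses carry over verbatim — $(M,\om)$ symplectically aspherical and convex at infinity or geometrically bounded, and $\Sigma$ of (virtual) restricted contact type or stable tame — so both theorems are simultaneously applicable.

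There is essentially no obstacle here beyond bookkeeping; the entire content has been absorbed into the two preceding theorems, whose proofs (compactness via the Fundamental Lemma, invariance under Hamiltonian perturbation, and the Morse--Bott computation) are the substantive work. If one wanted to be careful about one point, it would be verifying that the Morse--Bott computation in the second theorem genuinely yields $\H_*(\Sigma)$ rather than a shifted or twisted version — but as noted, $\A^F$ vanishes identically on its critical manifold $\Sigma$ since $\Sigma$ is a regular level set, so no shift occurs and the identification is the naive one. Hence the argument is complete.
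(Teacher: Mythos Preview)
Your proposal is correct and matches the paper's approach exactly: the paper simply states that the corollary follows from the two preceding theorems, and your argument by contradiction using those two theorems is precisely the intended reasoning.
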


\begin{Rmk}
If the hypersurface $\Sigma$ is stable then Corollary  \ref{cor:existence_of_Reeb_orbit_on_displaceable_stable} has been proved by Schlenk in \cite{Schlenk_Applications_of_Hofers_geometry_to_Hamiltonian_dynamics} using different methods. Schlenk does not need the tameness assumption. Using a local version of Rabinowitz Floer homology Corollary \ref{cor:existence_of_Reeb_orbit_on_displaceable_stable} can also be proved without the tameness assumption in the framework of Rabinowitz Floer homology, see \cite{Cieliebak_Frauenfelder_Paternain_Symplectic_Topology}.
\end{Rmk}

\subsubsection{Relations to symplectic homology and loop space topology}

For the next theorem we assume that we are in the same setup as at the beginning of section \ref{sec:compactness}, \textit{The case of restricted contact type}. Namely, $(W,\om=d\lambda)$ is a compact exact symplectic manifold with boundary $\p W=\Sigma$ of restricted contact type and $(M,\om)$ is the completion of $W$. In this situation symplectic homology $\SH_*(\Sigma,M)$ resp.~cohomology $\SH^*(\Sigma,M)$  of $M$ can be defined, see Cieliebak -- Floer -- Hofer \cite{Cieliebak_Floer_Hofer_Symplectic_homology_II_A_general_construction} and Viterbo \cite{Viterbo_partI}.

\begin{Thm}[\cite{Cieliebak_Frauenfelder_Oancea_Rabinowitz_Floer_homology_and_symplectic_homology}]
There exists a long exact sequence
\beq
\cdots\to\SH^{-*}(\Sigma,M)\to\SH_*(\Sigma,M)\to\RFHb_*(\Sigma,M)\to\SH^{-*+1}(\Sigma,M)\to\cdots
\eeq
\end{Thm}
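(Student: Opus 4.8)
The plan is to construct the long exact sequence by realizing Rabinowitz Floer homology as the cone of a chain map relating symplectic cohomology and symplectic homology. First I would recall that $\SH_*(\Sigma,M)$ is computed from a Hamiltonian $H$ on $M$ which vanishes on $W$ and is linear of slope $\to\infty$ on the cylindrical end, while $\SH^{-*}(\Sigma,M)$ arises from the cohomological version; the crucial point is that both can be described using Hamiltonians whose time-1 orbits, after a suitable limit, live near $\Sigma$ and correspond to Reeb orbits, exactly as the critical points of $\A^F$ do. The technical heart is an interpolation: one considers a family of action functionals $\A^{F}_\tau$ or equivalently a family of Hamiltonians that degenerates the Rabinowitz setup into a pair (positive-slope / negative-slope) Floer problems. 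Concretely, following Cieliebak--Frauenfelder--Oancea, I would split the generators of $\RFC_*(\A^F)$ according to the sign of the Lagrange multiplier $\eta$: generators with $\eta>0$ (together with the constants, suitably handled) form a subcomplex isomorphic, after a limit, to the symplectic homology complex, and the quotient, with $\eta<0$, computes symplectic cohomology with a degree shift. This immediately yields a short exact sequence of chain complexes
\beq
0\to \SC_*(\Sigma,M)\to\RFC_*(\Sigma,M)\to \SC^{-*+1}(\Sigma,M)\to 0
\eeq
whose associated long exact sequence in homology is the claimed one, with the connecting map $\SH^{-*}\to\SH_*$ being the relevant ``pair-of-pants''/continuation map.

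The key steps, in order: (1) Fix the completion $(M,\om=d\lambda)$ and a defining function $F$ for $\Sigma$ with $dF$ compactly supported, as in Theorem~\ref{thm:action_bounds_imply_compactness}; choose a Moser pair and a generic almost complex structure so that $\A^\Mp$ is Morse and the compactness results apply. (2) Introduce an action filtration on $\RFC_*$ by the value of $\A^\Mp$, and observe that since $\A^\Mp$ decreases along gradient flow lines, generators of high action form a subcomplex. (3) Identify the part of the complex generated by orbits with large positive period $\eta$ with the symplectic homology complex of $M$ by a neck-stretching / adiabatic argument: as the slope parameter goes to infinity, the Floer data for $\A^F$ near $\Sigma$ converge to the Floer data defining $\SH_*$, and one checks the moduli spaces match. (4) Dually, identify the complementary part (negative $\eta$) with the symplectic cochain complex $\SC^{-*+1}$, using the involution $I:(v,\eta)\mapsto(v^-,-\eta)$ from the Grading subsection, which satisfies $\A^F\circ I=-\A^F$ and hence swaps positive- and negative-period orbits while dualizing. (5) Assemble the short exact sequence of complexes and pass to the long exact sequence, identifying the connecting homomorphism with the canonical map $\SH^{-*}(\Sigma,M)\to\SH_*(\Sigma,M)$ that factors through the "sandwiched" full loop space / Morse-theoretic count; invoke Theorem~\ref{thm:RFH_does_not_depend_on_exterior} to replace $M$ by $W$ if needed for the $\SH$ side.

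The main obstacle is step (3)--(4): making the identification of the positive/negative period pieces of the Rabinowitz complex with the symplectic (co)homology complexes genuinely precise. The difficulty is twofold. First, one must interpolate between the Rabinowitz action functional, which has a \emph{free} Lagrange multiplier $\eta$ and whose critical points see \emph{all} Reeb periods at once, and a sequence of ordinary Hamiltonian action functionals of increasing slope, each of which sees only finitely many periods; controlling the moduli spaces uniformly through this limit requires the Fundamental Lemma and the a priori $\eta$-bounds of Remark~\ref{rmk:Fundamental_Lemma_implies_uniform_bounds_on_eta} to prevent breaking or escape to $\eta=\pm\infty$. Second, the constant loops in $\Sigma$ (the $\eta=0$ part, contributing $\H_*(\Sigma)$ in the Morse--Bott picture) sit at the junction between the two pieces and must be apportioned correctly between $\SH_*$ and $\SH^{-*}$; getting the bookkeeping of this ``middle'' exactly right — so that the cone is precisely $\RFC_*$ and the degree shift is $-*+1$ rather than something off by one — is the delicate combinatorial-analytic point, and it is here that working with field ($\Z/2$) coefficients, so that $\overline{\rho}$ in diagram~\eqref{dia} is an isomorphism, is genuinely used.
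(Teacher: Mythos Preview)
The paper does not contain a proof of this theorem. This is a survey article, and the long exact sequence is simply \emph{stated} with a citation to \cite{Cieliebak_Frauenfelder_Oancea_Rabinowitz_Floer_homology_and_symplectic_homology}; no argument, sketch, or outline is given here. So there is nothing in the present paper to compare your proposal against.

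That said, your outline is broadly in the spirit of the actual Cieliebak--Frauenfelder--Oancea argument: the long exact sequence does arise from realizing $\RFHb_*$ as a mapping cone, and the splitting by the sign of the period is the correct heuristic. Two cautions if you intend to flesh this out. First, the actual proof in \cite{Cieliebak_Frauenfelder_Oancea_Rabinowitz_Floer_homology_and_symplectic_homology} does not work directly with the Rabinowitz complex and split it by $\mathrm{sign}(\eta)$; rather, it introduces auxiliary ``v-shaped'' Hamiltonians on the completion (linear of positive slope at $+\infty$ and effectively negative slope on the filling side), proves that their Floer homology computes $\RFHb_*$, and then uses an action filtration on \emph{that} complex to obtain the short exact sequence. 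The identification of Rabinowitz Floer homology with the Floer homology of v-shaped Hamiltonians is itself a substantial step. Second, your step (3)--(4) glosses over the direction of the maps: the subcomplex/quotient roles of $\SH_*$ and $\SH^{-*}$ are dictated by which side has higher action, and getting this (and the resulting degree shift) right requires care with conventions; the involution $I$ is suggestive but does not by itself produce the short exact sequence.
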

From now on $M=T^*B$ is the cotangent bundle with its standard symplectic structure $\om_{std}=d\lambda_{std}$ over a closed manifold $B$ and $\Sigma=S^*_gB$ is the unit cotangent bundle with respect to some Riemannian metric $g$ on $B$. Using that the symplectic (co-)homology of cotangent bundles has been computed before by Viterbo, Salamon -- Weber, and Abbondandolo -- Schwarz \cite{Viterbo_partI,
Salamon_Weber_Floer_homology_and_heat_flow,
Abbo_Schwarz_On_the_Floer_homology_of_cotangent_bundles} the following Theorem has been proved via the long exact sequence in \cite{Cieliebak_Frauenfelder_Oancea_Rabinowitz_Floer_homology_and_symplectic_homology}. An independent and direct proof has been given in \cite{Abbondandolo_Schwarz_Estimates_and_computations_in_Rabinowitz_Floer_homology}.

\begin{Thm}[\cite{Cieliebak_Frauenfelder_Oancea_Rabinowitz_Floer_homology_and_symplectic_homology,Abbondandolo_Schwarz_Estimates_and_computations_in_Rabinowitz_Floer_homology}]\label{thm:RFH_of_cotangent_bundle_is_homology_of_loop_space}
\beq
\RFHb_*(S^*_gB,T^*B)=\begin{cases}
\H^{-*+1}(\L_B)&\text{if }*<0\\
\H_{*}(\L_B)&\text{if }*>1\\
\end{cases}
\eeq
where $\L_B:=C^\infty(S^1,B)$ is the free loop space of $B$.
\end{Thm}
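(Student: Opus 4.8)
The plan is to combine the long exact sequence relating Rabinowitz Floer homology to symplectic (co)homology (the theorem of Cieliebak--Frauenfelder--Oancea stated just above) with the known computation of the symplectic homology and cohomology of the unit cotangent bundle in terms of loop space (co)homology. First I would recall the Viterbo / Salamon--Weber / Abbondandolo--Schwarz isomorphisms $\SH_*(S^*_gB,T^*B)\cong\H_*(\L_B)$ and $\SH^*(S^*_gB,T^*B)\cong\H^*(\L_B)$ (working over $\Z/2$, where there are no orientation-of-loop-space subtleties, so the twisting by $w_2(B)$ that appears with $\Z$-coefficients can be ignored). These identifications are moreover compatible with the pair-of-pants product / Chas--Sullivan loop product structure, but for the present statement only the additive isomorphisms and their naturality under the continuation maps are needed.

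Next I would plug these into the long exact sequence
\beq
\cdots\to\SH^{-*}(S^*_gB,T^*B)\to\SH_*(S^*_gB,T^*B)\to\RFHb_*(S^*_gB,T^*B)\to\SH^{-*+1}(S^*_gB,T^*B)\to\cdots
\eeq
which becomes
\beq
\cdots\to\H^{-*}(\L_B)\to\H_*(\L_B)\to\RFHb_*(S^*_gB,T^*B)\to\H^{-*+1}(\L_B)\to\cdots\;.
\eeq
The key structural point is that the connecting-type map $\SH^{-*}(\Sigma,M)\to\SH_*(\Sigma,M)$ appearing here is (up to the above identifications) the map $\H^{-*}(\L_B)\to\H_*(\L_B)$ induced on loop space homology by the $S^1$-action, or equivalently by capping with a suitable class; its crucial feature is that it is an isomorphism in a range of degrees. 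Concretely, for $*<0$ this map vanishes for degree reasons on the relevant summand so that the long exact sequence splits into short pieces $0\to\H_*(\L_B)\to\RFHb_*\to\H^{-*+1}(\L_B)\to 0$ with the first term forced to vanish (or to contribute only in the excluded degrees $0,1$), leaving $\RFHb_*\cong\H^{-*+1}(\L_B)$; and dually for $*>1$ the cohomological term drops out and one is left with $\RFHb_*\cong\H_*(\L_B)$. The two ``bad'' degrees $*=0,1$ are exactly where the map between $\SH$ and $\SH^{-*}$ is genuinely present (it reflects the critical manifold of constant loops / the zero section and the relation $\RFH^{-*}\cong\RFH_*$), which is why the statement is only made for $*<0$ and $*>1$.

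The main obstacle, and the step I would spend the most care on, is the identification of the maps in the long exact sequence with the classical loop-space maps and the resulting degree-counting argument showing that $\SH^{-*}(\Sigma,M)\to\SH_*(\Sigma,M)$ kills the relevant part of $\H^{-*}(\L_B)$ for $*\neq 0,1$: this requires knowing not just the groups $\SH_*$ and $\SH^*$ but the precise behaviour of the Viterbo isomorphism with respect to the map $c^*$ (induced by the $S^1$-action, or Viterbo's map $\SH^*\to\SH_*$) under which $\H^{-*}(\L_B)$ maps to $\H_*(\L_B)$ via intersection with the fundamental class of $B$ composed with the Gysin sequence of the $S^1$-action on $\L_B$. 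Equivalently, one uses that on cotangent bundles symplectic homology in negative degrees is supported away from the image of this map. I would either cite \cite{Cieliebak_Frauenfelder_Oancea_Rabinowitz_Floer_homology_and_symplectic_homology} for this compatibility directly, or alternatively follow the independent route of Abbondandolo--Schwarz \cite{Abbondandolo_Schwarz_Estimates_and_computations_in_Rabinowitz_Floer_homology}, who construct chain-level isomorphisms from the Rabinowitz Floer complex to the Morse complex of the (energy-type) action functional on $\L_B$ and to its dual, thereby obtaining the two cases $*<0$ and $*>1$ without routing through symplectic homology at all. Everything else — exactness, the $\Z$-grading by Conley--Zehnder index in the simply-connected or $c_1$-null case, and independence of the metric $g$ (restricted contact type isotopy invariance, Theorem \ref{thm:RFH_does_not_depend_on_exterior}) — is formal given the inputs.
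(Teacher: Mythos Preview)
Your approach matches the paper's: it explicitly says the result ``has been proved via the long exact sequence'' in \cite{Cieliebak_Frauenfelder_Oancea_Rabinowitz_Floer_homology_and_symplectic_homology} together with the Viterbo / Salamon--Weber / Abbondandolo--Schwarz computation of $\SH_*(T^*B)$, and mentions the direct Abbondandolo--Schwarz route as an alternative. So the overall strategy is the right one.

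However, you overstate the ``main obstacle.'' In the range $*<0$ and $*>1$ you do \emph{not} need to identify the map $\SH^{-*}\to\SH_*$ with anything geometric (no $S^1$-action, no Gysin, no Chas--Sullivan). The argument is pure degree bookkeeping: $\H_*(\L_B)=0$ for $*<0$ and $\H^{j}(\L_B)=0$ for $j<0$. Hence for $*>1$ both $\H^{-*}(\L_B)$ and $\H^{-*+1}(\L_B)$ vanish, and the long exact sequence gives $\RFHb_*\cong\H_*(\L_B)$ immediately; for $*<0$ both $\H_*(\L_B)$ and $\H_{*-1}(\L_B)$ vanish, and one reads off $\RFHb_*\cong\H^{-*+1}(\L_B)$. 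The identification of the connecting map is only relevant for the degrees $*=0,1$, which the statement explicitly excludes (and which the paper treats separately in the following Remark). So your paragraph about compatibility with $c^*$ and the loop-space Gysin sequence can be dropped entirely for this theorem.
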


\begin{Rmk}
In the remaining degrees $*=0,1$ the answer is known and depends on the Euler class, see \cite{Cieliebak_Frauenfelder_Oancea_Rabinowitz_Floer_homology_and_symplectic_homology,Abbondandolo_Schwarz_Estimates_and_computations_in_Rabinowitz_Floer_homology}.
\end{Rmk}

\begin{Rmk}\label{rmk:merry_abbo_schwarz_iso}
In \cite{Merry_On_the_RFH_of_twisted_cotangent_bundles} Merry extends Theorem \ref{thm:RFH_of_cotangent_bundle_is_homology_of_loop_space} to energy hypersurfaces above the Ma\~n\'e critical value in twisted cotangent bundles.
\end{Rmk}

\section{Applications and Results}

\subsection{Symplectic and Contact topology}

As we pointed out in Theorem \ref{thm:RFH_does_not_depend_on_exterior} Rabinowitz Floer homology does not depend on the exterior. It is an open question to what extent Rabinowitz Floer homology $\RFH(\Sigma,M)$ depends on the filling $M$. A partial independence result is the following Theorem.

\begin{Thm}\label{thm:RFH_does_not_depend_on_filling}
Let $\dim B\geq4$ and $\pi_1(B)=\{1\}$ and let $(W,\om=d\lambda)$ be a compact exact symplectic manifold with $\p W \cong S^*_gB$. If $(\p W,\lambda|_{\p W})$ is contactomorphic to $(S^*_gB,\lambda_{std}|_{S^*_gB})$  then 
\beq
\RFHb_*(\p W,W)\cong\RFHb_*(S^*_gB,T^*B)\;.
\eeq
\end{Thm}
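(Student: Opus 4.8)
The plan is to combine the invariance properties of Rabinowitz Floer homology already recorded in the excerpt with the computational result of Theorem~\ref{thm:RFH_of_cotangent_bundle_is_homology_of_loop_space} via the Cieliebak--Frauenfelder--Oancea long exact sequence relating $\RFHb_*$ to symplectic (co)homology. The point is that $\RFHb_*(\p W,W)$ fits into such a long exact sequence with $\SH_*(\p W,W)$ and $\SH^{-*}(\p W,W)$, so it suffices to identify the symplectic homology of the filling $W$ with that of $T^*B$. Thus the problem is reduced to proving that under the hypotheses ($\dim B\geq 4$, $\pi_1(B)=1$, and the contactomorphism condition on the boundary) one has $\SH_*(\p W,W)\cong\SH_*(S^*_gB,T^*B)$ and likewise in cohomology, compatibly with the maps in the sequence, whence the five lemma gives the isomorphism on $\RFHb_*$.

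First I would set up the comparison of symplectic homologies. The key structural input is that symplectic homology of a Liouville domain depends, in favourable circumstances, only on the contact boundary together with topological data of the filling --- and here $\pi_1(B)=1$ together with $\dim B\geq 4$ is exactly the regime in which the relevant obstructions (wrapped Floer theory subtleties, low-dimensional phenomena, behaviour of the Conley--Zehnder grading) are controlled. Concretely, the contactomorphism $(\p W,\lambda|_{\p W})\cong(S^*_gB,\lambda_{std})$ lets us glue the completion $\widehat W$ and $T^*B$ along collars of their common contact boundary, producing a Liouville cobordism whose symplectic homology can be compared using the transfer/restriction maps of Viterbo and the exactness of the pieces; the simple-connectivity of $B$ kills the potential discrepancy coming from non-contractible Reeb orbits / free homotopy classes, so that one is really comparing the contractible-orbit part, which is determined by the boundary contact structure. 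One then invokes Theorem~\ref{thm:RFH_of_cotangent_bundle_is_homology_of_loop_space} (equivalently the underlying $\SH_*(S^*_gB,T^*B)\cong\H_*(\L_B)$) to pin down the target.

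Then I would assemble the long exact sequences for $(\p W,W)$ and for $(S^*_gB,T^*B)$ into a ladder, check that the identification of symplectic homologies is compatible with the $\SH^{-*}\to\SH_*$ maps and with continuation, and conclude by the five lemma that $\RFHb_*(\p W,W)\cong\RFHb_*(S^*_gB,T^*B)$. The grading is matched using the vanishing of $c_1$ on $\pi_2$ in the cotangent-bundle setting (so the $\Z$-grading by Conley--Zehnder index is available on both sides) and the fact that a contactomorphism isotopic to the canonical one preserves it.

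The main obstacle I expect is precisely the step asserting that $\SH_*$ of the filling $W$ agrees with $\SH_*$ of the standard cotangent bundle: a priori $\SH_*(\p W,W)$ sees the symplectic topology of the interior of $W$, not just its contact boundary, so one must genuinely use the hypotheses --- this is where $\dim B\geq 4$ and $\pi_1(B)=1$ do real work, ruling out exotic fillings detectable by symplectic homology and ensuring the Viterbo transfer map between the two fillings is an isomorphism (in dimension $\geq 6$ of the total space, i.e.\ $\dim B\geq 3$, plus the simple-connectivity). Making this rigorous amounts to an application of the known rigidity of subcritical/flexible-vs-rigid dichotomies for Liouville fillings of $S^*_gB$ together with the computation of wrapped Floer homology of the cotangent fibre, and I would cite the relevant results rather than reprove them; the remainder of the argument (the ladder and five lemma) is formal.
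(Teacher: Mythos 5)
There is a genuine gap, and it sits exactly at the step you yourself flag as the main obstacle. Your plan reduces the theorem to the claim that $\SH_*(\p W,W)\cong\SH_*(S^*_gB,T^*B)$ (compatibly with the maps $\SH^{-*}\to\SH_*$), but none of the mechanisms you offer can deliver this. There is no Viterbo transfer map between two \emph{different} fillings of the same contact manifold: transfer/restriction goes from a Liouville domain to a Liouville subdomain, and $W$ and $D^*_gB$ are not nested. Likewise, gluing $\widehat W$ to $T^*B$ along the common contact boundary does not produce a Liouville cobordism you can feed into that machinery (the Liouville fields point outward on both sides), so no ladder of long exact sequences is created and the five lemma has nothing to act on. More fundamentally, filling-independence of $\SH$ is a strictly stronger statement than what is needed and is not a consequence of the stated hypotheses: $\SH_*(W)$ sees the interior of $W$ (for instance through the ordinary cohomology of $W$ entering via the constant orbits), and the hypotheses do not pin that down. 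The point of Rabinowitz Floer homology here is precisely that these filling-dependent contributions cancel: $\RFHb$ is (roughly) the cone of $\SH^{-*}\to\SH_*$, so it can be filling-independent even when $\SH$ is not. Also, the role you assign to $\pi_1(B)=1$ (killing non-contractible Reeb orbits) is not the correct one; Reeb orbits of $S^*_gB$ are closed geodesics and need not be contractible in $S^*_gB$.

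The argument the survey points to (a special case of Cieliebak--Frauenfelder--Oancea) is different in kind: it works directly with the Rabinowitz Floer moduli spaces and shows, via a stretching-of-the-neck/SFT-compactness argument, that the gradient flow cylinders with asymptotics on $\Sigma=\p W$ never enter the filling. The hypotheses $\dim B\geq 4$ and $\pi_1(B)=\{1\}$ enter as an index condition on the closed Reeb orbits (closed geodesics) which guarantees that there exist no \emph{rigid} finite energy planes in $W$ --- this is the ``crucial ingredient'' named after the theorem --- so the Floer data, and hence $\RFHb_*(\p W,W)$, is determined by the contact boundary alone and agrees with $\RFHb_*(S^*_gB,T^*B)$. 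If you want to salvage your outline, you would have to replace the $\SH$-comparison step by this neck-stretching/index argument (or by an independent proof of filling-invariance of the relevant $\SH$ groups together with a genuine map of long exact sequences, which is not available from the results quoted in the paper).
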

This is a special case of a result proved in \cite{Cieliebak_Frauenfelder_Oancea_Rabinowitz_Floer_homology_and_symplectic_homology}. The crucial ingredient in the above Theorem is that there exist no rigid finite energy planes in the filling $W$. If $B$ is a sphere then the above theorem can in fact be checked by a direct computation, see \cite{Cieliebak_Frauenfelder_Restrictions_to_displaceable_exact_contact_embeddings}.

\begin{Cor}
Under the assumptions of Theorem \ref{thm:RFH_does_not_depend_on_filling} $S^*_gB$ does not admit an exact contact embedding into $\R^{2n}$ or, more generally, into a subcritical Stein manifold.  
\end{Cor}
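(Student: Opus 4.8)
The plan is to argue by contradiction using the displaceability result together with the filling-independence theorem. Suppose $S^*_gB$ admitted an exact contact embedding into $\R^{2n}$, or more generally into a subcritical Stein manifold $V$. An exact contact embedding gives a compact exact symplectic manifold $W$ with $\p W \cong S^*_gB$ realized as a restricted contact type hypersurface bounding a compact region inside $V$. By Theorem \ref{thm:RFH_does_not_depend_on_exterior}, $\RFHb_*(\p W, W) = \RFHb_*(\p W, V)$, so it suffices to compute $\RFHb_*(S^*_gB, V)$ viewing $S^*_gB$ as a hypersurface inside the subcritical Stein manifold $V$.

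The key point is that a subcritical Stein manifold is \emph{displaceable} (more precisely, any compact subset, and in particular the bounding hypersurface $S^*_gB$, can be displaced by a Hamiltonian isotopy — this is the standard fact that subcritical Stein manifolds, being split as a product with $\C$ after a suitable deformation, admit Hamiltonian displacement of compact sets). Hence by part (2) of the displaceability theorem (using that $S^*_gB$ is of restricted contact type), the full Rabinowitz Floer homology vanishes:
\beq
\RFHb_*(S^*_gB, V) \cong 0\;.
\eeq
On the other hand, combining Theorem \ref{thm:RFH_does_not_depend_on_filling} with Theorem \ref{thm:RFH_of_cotangent_bundle_is_homology_of_loop_space}, and using $\dim B \geq 4$, $\pi_1(B) = \{1\}$, we get
\beq
\RFHb_*(S^*_gB, V) \cong \RFHb_*(S^*_gB, T^*B) \cong \H_*(\L_B)
\eeq
in degrees $* > 1$. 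Since $B$ is a closed manifold, $\L_B$ is nonempty and $\H_*(\L_B)$ is nonzero in infinitely many positive degrees (for instance, the constant loops give $\H_*(B) \hookrightarrow \H_*(\L_B)$, and already $\H_0$ contributes after the degree shift; more robustly, by results of Vigu\'e-Poirrier--Sullivan or Gromoll--Meyer the loop space homology of a simply connected closed manifold is unbounded). This contradicts the vanishing above.

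The main obstacle is verifying that the hypersurface $S^*_gB$ inside the subcritical Stein manifold $V$ is genuinely displaceable in the precise sense required by the displaceability theorem, i.e.~that there is a compactly supported (or admissible) Hamiltonian $H$ with $\phi_H^1(S^*_gB) \cap S^*_gB = \emptyset$; this rests on the structure theory of subcritical Stein manifolds (deformation to a split form $V' \times \C$) and a careful check that the completion used in defining $\RFHb$ is compatible with this displacement — exactly the kind of geometric input that Corollary-type statements in this area rely on and that is worked out in \cite{Cieliebak_Frauenfelder_Restrictions_to_displaceable_exact_contact_embeddings}. A secondary point to be careful about is the degree range: the isomorphism with loop space homology in Theorem \ref{thm:RFH_of_cotangent_bundle_is_homology_of_loop_space} only holds for $* < 0$ and $* > 1$, so one must exhibit the obstructing nonvanishing class in one of these ranges, which is automatic since $\H_k(\L_B) \neq 0$ for infinitely many $k > 1$ when $B$ is closed and simply connected of dimension $\geq 4$.
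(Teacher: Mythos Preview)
Your argument is correct and follows essentially the same route as the paper's proof: assume an exact contact embedding into a subcritical Stein manifold, use displaceability of compact subsets there (the paper cites Biran--Cieliebak for this) to conclude $\RFHb_*\cong 0$, then combine Theorems \ref{thm:RFH_does_not_depend_on_exterior} and \ref{thm:RFH_does_not_depend_on_filling} with Theorem \ref{thm:RFH_of_cotangent_bundle_is_homology_of_loop_space} to derive a contradiction. The only step the paper makes explicit that you gloss over is why the embedded hypersurface bounds a compact region in $V$: this uses $\H_{2n-1}(V)=0$ for a subcritical Stein manifold of real dimension $2n$.
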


\begin{proof}
We assume by contradiction that there exists an exact contact embedding of $S^*_gB$ into a subcritical Stein manifold $(M,\om=d\lambda)$. We denote by $\Sigma$ the image of $S^*_gB$ in $M$. Since $\H_{2n-1}(M)=0$, $2n=\dim M$, the hypersurface $\Sigma$ bounds a compact region $W$ in $M$. Because any compact subset of a subcritical Stein manifold is displaceable, see Biran -- Cieliebak \cite{Biran_Cieliebak_Lagrangian_embeddings_into_subcritical_Stein_manifolds}, $\RFHb_*(\Sigma,M)\cong0$. On the other hand by Theorem \ref{thm:RFH_does_not_depend_on_exterior} and \ref{thm:RFH_does_not_depend_on_filling} we know $0\cong\RFHb_*(\Sigma,M)\cong\RFHb_*(\Sigma, W)\cong\RFHb_*(S^*_gB,T^*B)$. This contradicts Theorem \ref{thm:RFH_of_cotangent_bundle_is_homology_of_loop_space}.
\end{proof}

\subsection{Global perturbations of Hamiltonian systems}

We recall from section \ref{sec:perturbed_Rabinwitz_action_functional} that critical points of $\A^\Mp$, $\Mp=(F,H)$, give rise to leaf-wise intersections.

\begin{Thm}\label{thm:existence_of_LI}
Let $\Sigma\subset(M,\om=d\lambda)$ be a closed, bounding, restricted contact type hypersurface and $M$ convex at infinity or geometrically bounded. We denote by $\wp>0$ the smallest period of a Reeb orbit on $(\Sigma,\lambda|_\Sigma)$ which is contractible in $M$. Let $\psi\in\Ham_c(M,\om)$, that is, $\psi$ is a Hamiltonian diffeomorphism generated by a Hamiltonian with compact support. If the Hofer norm $||\psi||<\wp$ then there exists a leaf-wise intersection.
\end{Thm}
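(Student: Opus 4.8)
The plan is to derive the existence of a leaf-wise intersection point from the non-vanishing of an appropriate version of Rabinowitz Floer homology together with a spectral estimate controlling critical values under Hamiltonian perturbations. Concretely, I would choose a Moser pair $\Mp=(F,H)$ where $F$ is a defining function for $\Sigma$ normalized so that $X_F|_\Sigma$ is the Reeb vector field of $\alpha=\lambda|_\Sigma$, and where $H$ generates (a time reparametrization of) the given $\psi\in\Ham_c(M,\om)$, so that by Proposition~\ref{prop:critical_points_give_LI} every element of $\Crit\A^\Mp$ produces a leaf-wise intersection. The strategy is then: if $\Crit\A^\Mp=\emptyset$, then $\RFHb(\Mp)=0$; but I will show that under the hypothesis $\|\psi\|<\wp$ the perturbed Rabinowitz Floer homology must be nonzero, a contradiction.

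The key mechanism is a \emph{spectral/action estimate}. First I recall that for the unperturbed functional $\A^F$ the relevant critical values are the empty set together with $\{0\}$ (constant loops in $\Sigma$) and the Reeb periods $\pm\wp,\pm(\text{next period}),\dots$, so there is a ``gap'' in the action spectrum: no critical value of $\A^F$ in the contractible class lies in $(-\wp,0)\cup(0,\wp)$ except $0$ itself, which carries $\H_*(\Sigma)\neq 0$. Next, I would invoke the standard Floer-theoretic fact that a homotopy of Moser pairs through $\A^{(F,sH)}$, $s\in[0,1]$, induces continuation maps whose effect on the action filtration is bounded by the Hofer-type energy of the homotopy, which here is estimated by $\|\psi\|$. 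Since the Fundamental Lemma (restricted contact type case) and Theorem~\ref{thm:action_bounds_imply_compactness} give the compactness needed to define these continuation maps and the filtered homologies $\RFHb^{(a,b)}$, one concludes: the class in $\RFHb(\A^F)$ coming from $\H_*(\Sigma)$ (which sits at action level $0$) maps under continuation to a nonzero class in $\RFHb^{(-\|\psi\|,\|\psi\|)}(\Mp)$, because $\|\psi\|<\wp$ means no spectral value of $\A^F$ other than $0$ can be crossed, so the generator cannot be killed. Hence $\RFHb^{(-\|\psi\|,\|\psi\|)}(\Mp)\neq 0$, forcing $\Crit\A^\Mp\neq\emptyset$.

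More carefully, I would organize the argument around the ``sandwiching'' of continuation maps: $\RFHb(\A^F)\to\RFHb(\A^\Mp)\to\RFHb(\A^F)$ composing to the identity, combined with the action bounds that confine the images to a window of width controlled by $\|\psi\|$. Because the part of $\RFHb(\A^F)$ at action $0$ is isomorphic to $\H_*(\Sigma)$ and the nearest other spectral values are at $\pm\wp$, the hypothesis $\|\psi\| < \wp$ ensures this summand survives into $\RFHb^{(a,b)}(\Mp)$ for suitable $a<0<b$ with $b-a$ small. This is exactly the type of spectral-invariant argument carried out in Albers--Frauenfelder, \cite{Albers_Frauenfelder_Spectral_invariants_in_RFH,Albers_Frauenfelder_Leafwise_intersections_and_RFH}, and I would cite the Fundamental Lemma and Theorem~\ref{thm:action_bounds_imply_compactness} as the inputs making all the moduli spaces well behaved, plus the Morse-Bott computation $\RFHb(\A^F)\cong\H_*(\Sigma)$ when $\Sigma$ carries no contractible closed characteristics (and the general definition via cascades otherwise).

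I expect the main obstacle to be the careful bookkeeping of the action filtration under the continuation homotopy: one must show that the energy of a continuation cylinder for the homotopy from $\A^F$ to $\A^{(F,H)}$ is bounded above essentially by $\|\psi\|$ (the Hofer norm), and not merely by the $L^\infty$-oscillation of a particular generating Hamiltonian; minimizing over all Hamiltonians generating $\psi$ is what brings in the Hofer norm, and making this precise requires the reparametrization built into the Moser pair together with an energy identity relating $\A^\Mp$-values along continuation solutions to the chosen primitive $\lambda$ and to $\int H$. A secondary subtlety is handling the Morse-Bott nature of $\A^F$ (the $S^1$-family of Reeb orbits and the critical manifold $\Sigma$ of constants) when identifying the surviving class; this is dealt with by the cascade construction of \cite{Frauenfelder_Arnold_Givental_Conjecture,Cieliebak_Frauenfelder_Restrictions_to_displaceable_exact_contact_embeddings}, and by noting that the fundamental class of $\Sigma$ lives at action exactly $0$, strictly inside the spectral gap of width $\wp$.
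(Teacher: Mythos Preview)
Your proposal is correct and matches the approach the paper sketches: the survey does not give a full proof but refers to \cite{Albers_Frauenfelder_Leafwise_intersections_and_RFH} and explains in the subsequent remark that the leaf-wise intersection ``arises as a perturbation of a constant critical point by a stretching-of-the-neck argument for gradient flow lines.'' Your continuation-map/action-filtration argument is precisely the homological packaging of that stretching-of-the-neck: the constant critical manifold $\Sigma$ sits at action $0$, the nearest other spectral values of $\A^F$ are at $\pm\wp$, and the action shift under the homotopy $\A^{(F,sH)}$ is controlled by $\|\psi\|<\wp$, so the class from $\H_*(\Sigma)$ survives in a small action window of $\A^\Mp$ and forces a critical point there.

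One minor comment: your first formulation (``if $\Crit\A^\Mp=\emptyset$ then $\RFHb(\Mp)=0$, but $\RFHb(\Mp)\neq 0$'') is weaker than what you actually need and use; the global Rabinowitz Floer homology is invariant regardless of $\|\psi\|$, so non-vanishing of $\RFHb(\Mp)$ alone does not exploit the hypothesis $\|\psi\|<\wp$. The real content, which you do state correctly further down, is the non-vanishing of the \emph{filtered} group $\RFHb^{(a,b)}(\Mp)$ for a window of width less than $\wp$, and this is exactly what the stretching-of-the-neck/local Floer homology argument in the paper's remark establishes. Also, the reference \cite{Albers_Frauenfelder_Spectral_invariants_in_RFH} is for the sharper result Theorem~\ref{thm:existence_of_infinitely_many_LI_with_spectral_invariants}; for the present theorem the original source is \cite{Albers_Frauenfelder_Leafwise_intersections_and_RFH}.
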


This was proved in \cite{Albers_Frauenfelder_Leafwise_intersections_and_RFH}. An alternative proof is given by G\"urel in \cite{Gurel_leafwise_coisotropic_intersection}. Kang \cite{Kang_Existence_of_leafwise_intersection_points_in_the_unrestricted_case} found an extension of Theorem \ref{thm:existence_of_LI} if $\Sigma$ is only of contact type. In \cite{Albers_Momin_Cup_length_estimates_for_leafwise_intersections} a cup-length estimate for leaf-wise intersections is proved under the assumptions of Theorem \ref{thm:existence_of_LI}.

\begin{Rmk}
The existence of displaceable hypersurfaces $\Sigma$ shows that a smallness assumption $||\psi||<\wp$ is necessary. 
\end{Rmk}

\begin{Rmk}
If $\psi=\id$ in Theorem \ref{thm:existence_of_LI} then each point in $\Sigma$ is a leaf-wise intersection point via constant critical points. The leaf-wise intersection point found in Theorem \ref{thm:existence_of_LI} arises as a perturbation of a constant critical point by a stretching-of-the-neck argument for gradient flow lines.  If this stretching-of-the-neck argument is replaced by local Floer homology around the constant critical points then for a generic Hamiltonian diffeomorphism $\psi$ in Theorem \ref{thm:existence_of_LI} there exists $\sum_{i=0}^{\dim\Sigma} b_i(\Sigma)$ different leaf-wise intersection points, see \cite{Albers_Frauenfelder_Leafwise_intersections_and_RFH,Kang_Existence_of_leafwise_intersection_points_in_the_unrestricted_case}.
\end{Rmk}

We recall that $\L_B=C^\infty(S^1,B)$ denotes the free loop space of $B$.

\begin{Thm}\label{thm:existence_of_infinitely_many_LI}
Let $\dim\H_*(\L_B)=\infty$. If $\dim B\geq2$ and $\Sigma\subset T^*B$ is a generic fiber-wise star-shaped hypersurface then for a generic $\psi\in\Ham_c(T^*B)$ there exist infinitely many leaf-wise intersection points. 
\end{Thm}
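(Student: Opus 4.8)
The plan is to run the variational scheme of \cite{Albers_Frauenfelder_Leafwise_intersections_and_RFH} and to derive the conclusion from the infinite-dimensionality of $\RFHb_*(S^*_gB,T^*B)$, in the spirit of \cite{Albers_McLean_SH_and_infinitley_many_LI}. The point is that leafwise intersections are, generically, in bijection with $\Crit\A^\Mp$, so that finitely many leafwise intersections would force the Rabinowitz Floer chain complex to be finite-dimensional; this contradicts Theorem~\ref{thm:RFH_of_cotangent_bundle_is_homology_of_loop_space} together with the hypothesis $\dim\H_*(\L_B)=\infty$.

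\emph{Genericity.} Pick a Moser pair $\Mp=(F,H)$ with $f^{-1}(0)=\Sigma$ whose Hamiltonian flows reparametrize the Reeb flow of $\Sigma$ and the flow generated by $\psi$. Two residual conditions on $\psi\in\Ham_c(T^*B)$ are needed. First, by \cite{Albers_Frauenfelder_Leafwise_Intersections_Are_Generically_Morse}, for a residual set of $\psi$ the functional $\A^\Mp$ is Morse; in particular $\Crit\A^\Mp$ is discrete. Second, one arranges that no leafwise intersection lies on a closed leaf: since $\Sigma$ is a \emph{generic} fiber-wise star-shaped hypersurface its closed Reeb orbits are nondegenerate, hence form a countable family $\{\gamma_j\}$ of embedded circles in $\Sigma$, and for each fixed $j$ the set of $\psi$ with $\psi(\gamma_j)\cap\gamma_j=\emptyset$ is open and dense in $\Ham_c(T^*B)$ (two $1$-dimensional submanifolds of $\Sigma$, $\dim\Sigma=2\dim B-1\geq 3$, can be made disjoint by an arbitrarily $C^\infty$-small Hamiltonian isotopy, and disjointness is stable). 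The intersection over all $j$ is again residual. If $\psi$ should lie outside this set only because $\psi(\gamma_j)=\gamma_j$ for some $j$, then every point of $\gamma_j$ is a leafwise intersection and the theorem holds trivially; hence we may assume $\psi$ is such that $\A^\Mp$ is Morse and no leafwise intersection sits on a closed leaf. By Proposition~\ref{prop:critical_points_give_LI} the map $\Crit\A^\Mp\to\{\text{leafwise intersections}\}$, $(v,\eta)\mapsto v(\tfrac12)$, is then injective.

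\emph{Identification of the homology and conclusion.} A fiber-wise star-shaped hypersurface in $(T^*B,\om_{std}=d\lambda_{std})$ is of restricted contact type, the fiber-wise radial vector field being an outward-pointing Liouville field; moreover any two such hypersurfaces are joined by an isotopy through fiber-wise star-shaped, hence restricted contact type, hypersurfaces (interpolate the Minkowski functionals fiberwise). Applying this to $\Sigma$ and $S^*_gB$ and using invariance of Rabinowitz Floer homology under restricted contact type isotopies, $\RFHb_*(\Sigma,T^*B)\cong\RFHb_*(S^*_gB,T^*B)$. By Theorem~\ref{thm:RFH_of_cotangent_bundle_is_homology_of_loop_space} and the hypothesis $\dim\H_*(\L_B)=\infty$ the right-hand side is infinite-dimensional over $\Z/2$. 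Now suppose, for contradiction, that $\psi$ admits only finitely many leafwise intersections. By the genericity step and injectivity of the above map, $\Crit\A^\Mp$ is finite, so every Novikov sum in $\RFCb(\Mp)$ is finite and $\RFCb(\Mp)$ is a finite-dimensional $\Z/2$-vector space; hence $\RFHb_*(\Sigma,T^*B)=\H(\RFCb(\Mp),\p)$ is finite-dimensional, a contradiction. Thus there are infinitely many leafwise intersections.

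\emph{Main obstacle.} The substance of the argument is concentrated in the genericity step. That $\A^\Mp$ can be made Morse by perturbing only $H$ (keeping $\Sigma=f^{-1}(0)$ fixed) is exactly the main theorem of \cite{Albers_Frauenfelder_Leafwise_Intersections_Are_Generically_Morse}, and must be combined compatibly with the separate transversality needed to push the (countably many) closed Reeb orbits off their $\psi$-images; one should check that both residual conditions can be met simultaneously and, if desired, with $\|\psi\|$ arbitrarily small. An alternative, closer to \cite{Albers_McLean_SH_and_infinitley_many_LI}, is to drop the ``no closed leaf'' requirement and instead control the $\Z$-towers of critical points that a leafwise intersection on a closed leaf produces by an action/index growth estimate in the filtered complex $\RFCb^{(a,b)}$; this trades the transversality discussion for more delicate bookkeeping, and is the route one is forced into if one wants a statement for a fixed, possibly non-generic $\psi$.
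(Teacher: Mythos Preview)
Your argument is correct and follows essentially the same route as the paper's proof sketch: reduce to $S^*_gB$ via a restricted contact type isotopy, invoke Theorem~\ref{thm:RFH_of_cotangent_bundle_is_homology_of_loop_space} and $\dim\H_*(\L_B)=\infty$ to get infinitely many critical points of the Morse functional $\A^\Mp$, then use a dimension count (this is where $\dim B\geq 2$ enters) to arrange generically that no leaf-wise intersection sits on a closed leaf, and conclude via Proposition~\ref{prop:critical_points_give_LI}. One small slip: $\psi(\gamma_j)$ need not lie in $\Sigma$, so the transversality should be carried out in $T^*B$ (dimension $2\dim B\geq 4$) rather than in $\Sigma$; the count $1+1<\dim T^*B$ is of course still satisfied, so nothing changes.
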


We point out that there is no assumption on the Hofer norm $||\psi||$ of $\psi$ in Theorem \ref{thm:existence_of_infinitely_many_LI}. This was proved in \cite{Albers_Frauenfelder_Leafwise_Intersections_Are_Generically_Morse} along the following lines. Since $\Sigma$ is fiber-wise star-shaped it is of restricted contact type and isotopic to $S^*_gB$ through restricted contact type hypersurfaces. Thus, by the assumption $\dim\H_*(\L_B)=\infty$, invariance of Rabinowitz Floer homology, and Theorem \ref{thm:RFH_of_cotangent_bundle_is_homology_of_loop_space} we conclude that the Rabinowitz Floer homology $\RFH(\Sigma,T^*B)$ is infinite dimensional. In particular, since for a generic perturbation the corresponding Rabinowitz action functional $\A^\Mp$ is Morse, $\A^\Mp$ has infinitely many critical points. Then a transversality result using $\dim B\geq2$ yields that generically critical points of $\A^\Mp$ won't lie on a closed leaf. Thus, the assertion of Theorem \ref{thm:existence_of_infinitely_many_LI} follows from Proposition \ref{prop:critical_points_give_LI}.

\begin{Rmk}
Theorems \ref{thm:existence_of_LI} and \ref{thm:existence_of_infinitely_many_LI} have been proved for energy hypersurfaces above Ma\~n\'e's critical value in twisted cotangent bundles in \cite{Merry_On_the_RFH_of_twisted_cotangent_bundles}. 
\end{Rmk}

\begin{Rmk}
As the sketch of the proof of Theorem \ref{thm:existence_of_infinitely_many_LI} shows to obtain generically infinitely many leaf-wise intersection points it is enough to prove that Rabinowitz Floer homology is infinite dimensional. A large class of examples with this property has been constructed by Albers -- McLean in \cite{Albers_McLean_SH_and_infinitley_many_LI}.
\end{Rmk}

Using spectral invariants in Rabinowitz Floer homology Theorem \ref{thm:existence_of_infinitely_many_LI} can be improved as follows, see \cite{Albers_Frauenfelder_Spectral_invariants_in_RFH}.

\begin{Thm}\label{thm:existence_of_infinitely_many_LI_with_spectral_invariants}
Let $\dim\H_*(\L_B)=\infty$. If $\dim B\geq2$ and $\Sigma\subset T^*B$ is a fiber-wise star-shaped hypersurface then for $\psi\in\Ham_c(T^*B)$ there exist infinitely many leaf-wise intersection points or a leaf-wise intersection point on a closed leaf. 
\end{Thm}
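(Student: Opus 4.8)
The plan is to upgrade the argument sketched for Theorem~\ref{thm:existence_of_infinitely_many_LI} by replacing genericity of $\psi$ with genericity of an \emph{auxiliary almost complex structure} $J$ in the construction of Rabinowitz Floer homology, while allowing the given $\psi$ (equivalently, the given Moser pair $\Mp=(F,H)$ with $\psi=\phi_H^1$) to be fixed. First I would observe, exactly as before, that since $\Sigma\subset T^*B$ is fiber-wise star-shaped it is of restricted contact type and isotopic through restricted contact type hypersurfaces to $S^*_gB$; hence by invariance of Rabinowitz Floer homology and Theorem~\ref{thm:RFH_of_cotangent_bundle_is_homology_of_loop_space}, together with the hypothesis $\dim\H_*(\L_B)=\infty$, the group $\RFHb_*(\Sigma,T^*B)$ is infinite dimensional. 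The issue is that $\A^\Mp$ for the given $\Mp$ need not be Morse, so we cannot directly read off infinitely many critical points. To get around this I would use the spectral invariants of \cite{Albers_Frauenfelder_Spectral_invariants_in_RFH}: for each nonzero class $\xi\in\RFHb_*(\Sigma,T^*B)$ there is a spectral number $c(\xi,\Mp)\in\R$, defined as the infimum of the action levels at which $\xi$ becomes visible in the filtered homology, and this number depends continuously on $\Mp$ in the Hofer/$C^0$-topology and lies in the action spectrum of $\A^\Mp$ whenever $\A^\Mp$ is Morse. Since $\RFHb_*$ is infinite dimensional, one can choose infinitely many classes whose spectral values, for a small Morse perturbation, are forced to be pairwise distinct.

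The core of the argument then runs as follows. Pick a Morse perturbation $\Mp_\epsilon$ of $\Mp$, close in the $C^\infty$-topology. Because $\RFHb_*(\Sigma,T^*B)$ is infinite dimensional and the spectral invariants of any collection of classes take values in the (discrete, when Morse) action spectrum, for $\epsilon$ small $\A^{\Mp_\epsilon}$ has infinitely many critical points whose spectral values lie in a fixed bounded window only if they coincide in unboundedly many levels — more precisely, one shows the spectral numbers $c(\xi_k,\Mp_\epsilon)$ for a suitable infinite family $\{\xi_k\}$ are all distinct, giving infinitely many critical points $(v_k,\eta_k)\in\Crit\A^{\Mp_\epsilon}$. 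Now take $\epsilon\to0$. Each critical point $(v_k,\eta_k)$ satisfies the equation \eqref{eqn:critical_points_eqn} with respect to $\Mp_\epsilon$. Using the compactness Theorem~\ref{thm:action_bounds_imply_compactness} and the Fundamental Lemma (which gives uniform bounds on $|\eta_k|$ in terms of the action, and the actions are controlled by the spectral values, which converge), one extracts from each sequence (in $\epsilon$) a limiting critical point of $\A^\Mp$ itself. By Proposition~\ref{prop:critical_points_give_LI} each such limiting critical point $(v,\eta)$ yields a leaf-wise intersection point $x=v(\tfrac12)$, with the one caveat from the Proposition: the assignment $\Crit\A^\Mp\to\{\text{leaf-wise intersections}\}$ is injective \emph{unless} there is a leaf-wise intersection on a closed leaf. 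This is precisely the dichotomy in the statement: either the limiting critical points produce infinitely many distinct leaf-wise intersection points, or two of them map to the same leaf-wise intersection point, which by the Proposition can only happen on a closed leaf, yielding a leaf-wise intersection point on a closed leaf.

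The step I expect to be the main obstacle is controlling the spectral invariants well enough to conclude that infinitely many of them are \emph{distinct} after a small perturbation, and simultaneously \emph{bounded} enough (or with controlled growth) that the associated critical points survive the limit $\epsilon\to0$ via compactness. In the generic-$\psi$ argument of Theorem~\ref{thm:existence_of_infinitely_many_LI} one sidesteps this by making $\A^\Mp$ Morse outright; here, with $\psi$ fixed, one must instead argue that (i) the spectral values of an infinite independent family cannot all collapse to finitely many values — this uses that $\RFHb_*$ is infinite dimensional as a $\Z/2$-vector space, so the filtered pieces must have unboundedly many jumps, hence the spectral spectrum is infinite — and (ii) each individual spectral value varies continuously (Lipschitz in the Hofer norm) under the perturbation, so the limiting critical values are genuine critical values of $\A^\Mp$. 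The Fundamental Lemma and Remark~\ref{rmk:Fundamental_Lemma_implies_uniform_bounds_on_eta} then supply the $C^0$-bound on $\eta$ needed to invoke Theorem~\ref{thm:action_bounds_imply_compactness}, after which bubbling is excluded by exactness of $(T^*B,\om_{std})$ exactly as in the proof of that compactness theorem. The rest — extracting the leaf-wise intersection and reading off the dichotomy — is then a direct application of Proposition~\ref{prop:critical_points_give_LI}.
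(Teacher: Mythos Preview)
Your approach is in the right spirit --- spectral invariants are indeed the decisive ingredient --- but it is more circuitous than the paper's sketch and leaves the crucial step under-argued. The paper (this being a survey, only a sketch is given) proceeds as follows: spectral invariants $c(\xi,\Mp)$ are defined for Morse Moser pairs by mini-max and then extended to \emph{all} Moser pairs by Lipschitz continuity in the Moser pair; the crucial property is that $c(\xi,\Mp)$ remains a genuine critical value of $\A^\Mp$ even when $\A^\Mp$ is degenerate. One then shows directly that the set of spectral values for the \emph{given} $\Mp$ is \emph{unbounded}, hence infinite. This immediately yields infinitely many critical points of $\A^\Mp$ (distinguished by their critical values), and Proposition~\ref{prop:critical_points_give_LI} gives the dichotomy exactly as you say. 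No passage through Morse perturbations $\Mp_\epsilon$ and limits of critical points is needed in the main argument; that limiting procedure lives inside the proof of the spectrality property, at the level of \emph{values}, not points.

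The place where your argument is genuinely incomplete is the claim that infinitely many of the limiting critical points of $\A^\Mp$ are distinct. Your point (i) argues that the spectral spectrum is infinite for the Morse perturbation $\Mp_\epsilon$, but Lipschitz continuity is per-class, not uniform over all $\xi$, so you cannot directly conclude that the limits $c(\xi_k,\Mp)$ remain pairwise distinct --- a priori they could collapse. What actually prevents this, and what the paper isolates as the key step, is \emph{unboundedness} of $\{c(\xi,\Mp)\}$ for the original $\Mp$: one compares via the Lipschitz estimate to the unperturbed pair $(F,0)$, where the computation of $\RFHb_*(S^*_gB,T^*B)$ in terms of $\H_*(\L_B)$ forces the spectral values to be unbounded, and the bounded shift by $\|H\|$ preserves unboundedness. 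Once you state and use this, the compactness theorem and the Fundamental Lemma are no longer needed in your main line of reasoning.
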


This Theorem is equivalent to the assertion that the Rabinowitz action functional always, even in degenerate situations, has infinitely many critical points. This does not follow from Theorem \ref{thm:existence_of_infinitely_many_LI} since for degenerate functions the Morse inequalities might fail.

The new ingredient in Theorem \ref{thm:existence_of_infinitely_many_LI_with_spectral_invariants} are spectral invariants. The idea behind spectral invariants is the following. To each homology class a critical value is associated. If the action functional is Morse this is done by a mini-max procedure. It can then be shown that this assignment is locally Lipschitz continuous in changes of the Moser pair. Thus, it extends to all Moser pairs. The crucial property of the extension is that it still assigns critical values to homology classes, even in degenerate situations.

To prove Theorem \ref{thm:existence_of_infinitely_many_LI_with_spectral_invariants} it is shown that the set of spectral values is unbounded and therefore gives rise to infinitely many critical points which are distinguished by their critical values.

It is an interesting open question whether a Gromoll-Meyer type theorem, \cite{Gromoll_Meyer_Periodic_geodesics_on_compact_riemannian_manifolds,Matthias_Verallgemeinerung_von_Gromoll_Meyer}, holds for leaf-wise intersection points, that is, if there exist infinitely many leaf-wise intersection points even in the case when there is a leaf-wise intersection point on a closed leaf. This problem is intimately related to the existence problem of (geometrically distinct) geodesics. Katok's example, \cite{Katok_Ergodic_perturbations_of_degenerate_integrable_Hamiltonian_systems} suggests that, as in the Gromoll-Meyer Theorem, a growth condition for the homology of the loop space is necessary. Interesting research in the direction can be found in \cite{Ginzburg_Gurel_Local_Floer_Homology_and_the_Action_Gap}.

Since $S^{2n-1}\subset\R^{2n}$ is displaceable for general Hamiltonian perturbations there need not exist leaf-wise intersection points. However, if the class of Hamiltonian perturbations is restricted to preserve symmetries this picture might change dramatically. Such a phenomenon was discovered by Ekeland -- Hofer in \cite{Ekeland_Hofer_Two_symplectic_fixed_point_theorems_with_applications_to_Hamiltonian_dynamics}. They prove that if $\Sigma$ is a centrally symmetric, restricted contact type hypersurface in $\R^{2n}$ and $\phi^t$ is an isotopy of centrally symmetric Hamiltonian perturbations there always exists a leaf-wise intersection point for $\phi^1$. If restricted contact type is replaced by star-shaped it is proved in \cite{Albers_Frauenfelder_Remark_on_a_Thm_by_Ekeland_Hofer} that under the same symmetry assumptions there exist infinitely many leaf-wise intersection points or a leaf-wise intersection point on a closed leaf. The proof uses a computation of $\Z/2$-invariant Rabinowitz Floer homology.

\subsection{Ma\~n\'e's critical value}

Let $(B,g)$ be a closed Riemannian manifold and $\sigma\in\Om^2(B)$ be a closed 2-form which vanishes on $\pi_2(B)$. Thus, on the universal cover $\pi:\widetilde{B}\to B$ the 2-form $\pi^*\sigma$ is exact. The twisted cotangent bundle is $(T^*B,\om:=\om_{std}+\tau^*\sigma)$ where $\tau:T^*B\to B$ is the projection. We fix a potential $U:B\to\R$ and define $F:T^*B\to\R$ by $F(q,p):=\tfrac12||p||_g^2+U(q)$. Then the Ma\~n\'e critical value is
\beq
c=c(g,\sigma,U):=\inf_{\theta}\;\sup_{q\in\widetilde{B}}\;\widetilde{F}(q,\theta_q)
\eeq
where $\widetilde{F}=F\circ\pi$ and $\theta\in\Om^1(\widetilde{B})$ satisfies $d\theta=\pi^*\sigma$.

In physical terms the Hamiltonian dynamics of $X_F$ taken with respect to the twisted symplectic form describes the motion of a particle on $B$ subject to a conservative force $-\nabla U$ and a magnetic field $\sigma$. For energies above the Ma\~n\'e critical value the magnetic field is interpreted as small compared to the energy. 

The hypersurfaces $\Sigma_k:=F^{-1}(k)$ above the Ma\~n\'e critical value, i.e.~$k>c$, are of virtual restricted contact type, see Remark \ref{rmk:virtual_restricted_contact_type}. If there are no topological obstructions $\Sigma_k$ is displaceable for sufficiently small $k$ if $\sigma\neq0$. This follows from the fact that the zero-section $B$ in $T^*B$ is displaceable, see Polterovich \cite{Polterovich_An_obstacle_to_non_Lagrangian_intersections}.

\begin{Thm}[\cite{Merry_On_the_RFH_of_twisted_cotangent_bundles}]
The hypersurface $\Sigma_k$ for $k>c$ is not displaceable. 
\end{Thm}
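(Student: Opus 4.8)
The plan is to use the computation of Rabinowitz Floer homology for virtual restricted contact type hypersurfaces together with the general vanishing result for displaceable hypersurfaces. The key point is that by Remark \ref{rmk:virtual_restricted_contact_type}, for $k>c$ the hypersurface $\Sigma_k = F^{-1}(k)$ is of virtual restricted contact type, so its Rabinowitz Floer homology $\RFH_*(\Sigma_k, T^*B)$ (on the contractible component of the loop space) is well defined. By the contrapositive of Theorem 1.5.4 (the displaceable case), if $\RFH_*(\Sigma_k, T^*B)$ does not vanish, then $\Sigma_k$ cannot be displaceable. So the whole proof reduces to showing that this Rabinowitz Floer homology is nontrivial.

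To establish nontriviality, I would appeal to Merry's extension of Theorem \ref{thm:RFH_of_cotangent_bundle_is_homology_of_loop_space} described in Remark \ref{rmk:merry_abbo_schwarz_iso}: for energy hypersurfaces above the Ma\~n\'e critical value in twisted cotangent bundles, Rabinowitz Floer homology is computed in terms of the homology of the free loop space $\L_B$ of $B$, in the same degree ranges as in the untwisted case. Concretely, one has an identification (in degrees $*<0$ and $*>1$) of $\RFH_*(\Sigma_k, T^*B)$ with $\H^{-*+1}(\L_B)$ respectively $\H_*(\L_B)$, restricted to the contractible loop component. Since $B$ is a nonempty closed manifold, its free loop space has nonzero homology — e.g. the constant loops give a copy of $\H_*(B)$, which is nonzero in degree $0$ and degree $\dim B$ — so at least one of these groups is nonzero. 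Hence $\RFH_*(\Sigma_k, T^*B) \neq 0$, and Theorem 1.5.4(1) forces $\Sigma_k$ to be non-displaceable.

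I expect the main obstacle to be a bookkeeping issue rather than a deep one: one must make sure that the nontrivial loop-space homology one points to actually lands in a degree range where the isomorphism with $\RFH$ is valid (the degrees $*=0,1$ being excluded from the clean statement), and that one is working on the correct component of the loop space — Rabinowitz Floer homology for (virtual) restricted contact type hypersurfaces in the excerpt is defined on the contractible component, so the relevant loop-space homology is that of the component of contractible loops in $\L_B$, which still receives the constant loops and hence still carries $\H_*(B)\neq 0$. A secondary point to verify is that the hypotheses of Theorem 1.5.4 (symplectic asphericity of $M=T^*B$ with the twisted form, convexity at infinity or geometric boundedness) are met; symplectic asphericity follows from the assumption that $\sigma$ vanishes on $\pi_2(B)$ together with exactness of $\om_{std}$, and geometric boundedness of twisted cotangent bundles is standard. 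With these verifications in place the argument is complete.
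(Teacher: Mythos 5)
Your argument is exactly the paper's: the theorem is deduced from the non-vanishing of Rabinowitz Floer homology above the Ma\~n\'e critical value (Merry's extension of Theorem \ref{thm:RFH_of_cotangent_bundle_is_homology_of_loop_space}, cf.\ Remark \ref{rmk:merry_abbo_schwarz_iso}) combined with the vanishing of $\RFH$ for displaceable (virtual) restricted contact type hypersurfaces. Your additional checks (degree range avoiding $*=0,1$, the contractible loop component, symplectic asphericity and geometric boundedness of the twisted cotangent bundle) are sensible fillings-in of details the paper leaves implicit, not a different route.
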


This follows from the fact that Rabinowitz Floer homology above the Ma\~n\'e critical value does not vanish, see Remark \ref{rmk:merry_abbo_schwarz_iso} .

At the Ma\~n\'e critical value the hypersurface $\Sigma_c$ is in general not stable or of virtual restricted contact type. It might even be singular, e.g.~if the magnetic field is zero. Moreover, there are examples where $\Sigma_c$ carries no closed characteristics, e.g.~the horocycle flow. The article \cite{Cieliebak_Frauenfelder_Paternain_Symplectic_Topology} proposes the following {\em paradigms}:

\begin{itemize}
\item[$(k>c)$] Above the Ma\~n\'e critical value, $\Sigma_k$ is virtually contact. It may or may not be stable. Its Rabinowitz Floer homology $\RFH(\Sigma_k)$ is defined and nonzero, so $\Sigma_k$ is non-displaceable (now proved by Merry \cite{Merry_On_the_RFH_of_twisted_cotangent_bundles}). The dynamics on $\Sigma_k$ is like that of a geodesic flow; in particular, it has a periodic orbit in every nontrivial free homotopy class.

\item[$(k=c)$] At the Ma\~n\'e critical value, $\Sigma_k$ is non-displaceable and can be expected to be non-stable (hence non-contact). 

\item[$(k<c)$] Below the Ma\~n\'e critical value, $\Sigma_k$ may or may not be of contact type. It is stable and displaceable (provided that $\chi(M)=0$), so its Rabinowitz Floer homology $\RFH(\Sigma_k)$ is defined and vanishes. In particular, $\Sigma_k$ has a contractible periodic orbit.
\end{itemize}

It is unknown whether $\Sigma_k$ is always stable below the Ma\~n\'e critical value. In \cite{Cieliebak_Frauenfelder_Paternain_Symplectic_Topology} many examples can be found supporting these paradigms.

\subsubsection*{Acknowledgments}
The authors thank Will Merry and Gabriel Paternain for various helpful comments on an earlier version.

This article was written during visits of the authors at the Institute for Advanced Study, Princeton. The authors thank the Institute for Advanced Study for their stimulating working atmospheres. 

This material is based upon work supported by the National Science Foundation under agreement No.~DMS-0635607 and DMS-0903856. Any opinions, findings and conclusions or recommendations expressed in this material are those of the authors and do not necessarily reflect the views of the National Science Foundation.

%
\bibliographystyle{amsalpha}
\bibliography{../../../Bibtex/bibtex_paper_list}
\end{document}